\documentclass[11pt,reqno]{amsart}
\usepackage{graphicx}
\usepackage{amsfonts,amsmath,amssymb}
\oddsidemargin=0.1in \evensidemargin=0.1in \textwidth=6.4in
\headheight=.2in \headsep=0.1in \textheight=8.4in

\newcommand{\cx}{{\mathbb{C}}}

\newcommand {\Q}{\mathcal Q}
\newtheorem{thm}{Theorem}[section]
\newtheorem{propos}[thm]{Proposition}
\newtheorem{corol}[thm]{Corollary}
\newtheorem{thm1}{Theorem}
\theoremstyle{definition}
\newtheorem{dfn}[thm]{Definition}
\newtheorem{ex}[thm]{Example}
\newtheorem{rema}[thm]{Remark}
\sloppy

\newcommand{\im}{\ensuremath{\mbox{\rm Im}\,}}
\newcommand{\re}{\ensuremath{\mbox{\rm Re}\,}}

\newcommand{\remark}[1]{\smallskip  \noindent \bf Remark #1.\rm\,\,}

\newcommand{\CC}[1]{\mathbb{C}^{#1}}
\newcommand{\CP}[1]{\mathbb{CP}^{#1}}
\newcommand{\RR}[1]{\mathbb{R}^{#1}}

\newcommand{\z}{\zeta}

\sloppy

\title[Analytic Continuation of Holomorphic Mappings]{Analytic Continuation of Holomorphic Mappings
From Nonminimal Hypersurfaces}
\author {I. Kossovskiy}
\address{Department of Mathematics, The University of Western Ontario, London, Ontario N6A 5B7 Canada}
\email{ikossovs@uwo.ca}
\author {R. Shafikov}
\email{}
\address{Department of Mathematics, The University of Western Ontario, London, Ontario N6A 5B7 Canada}
\email{shafikov@uwo.ca}
\begin{document}

\subjclass[2000]{32D15, 32V40, 32H02, 32H04, 32H35, 32M99, 32T25,
34M35}

\keywords{Analytic continuation, holomorphic mappings, real
hypersurfaces, nonminimal hypersurfaces, Segre varieties, Levi
nondegeneracy, automorphism groups}

\date{October 3, 2012}

\begin{abstract}
We study the analytic continuation problem for a germ of a
biholomorphic mapping from a nonminimal real hypersurface
$M\subset\CC{n}$ into a real hyperquadric $\mathcal
Q\subset\CP{n}$ and prove that under certain nondegeneracy
conditions any such germ extends locally biholomorphically along
any path lying in the complement $U\setminus X$ of the complex
hypersurface $X$ contained in $M$ for an appropriate neighbourhood
$U\supset X$. Using the monodromy representation for the
multiple-valued mapping obtained by the analytic continuation we
establish a connection between nonminimal real hypersurfaces and
singular complex ODEs.
\end{abstract}

\maketitle

\tableofcontents

\section{Introduction and main results}
Let $H(\zeta,\bar \zeta)$ be a nondegenerate Hermitian form in
$\CC{n+1}$ with $k+1$ positive and $l+1$ negative eigenvalues,
$k+l=n-1,\,0\leq l\leq k\leq n-1$. We call a hypersurface
$\Q\subset\CP{n}$ a {\it $(k,l)$-hyperquadric} if it is given in
homogeneous coordinates by
\begin{equation}\label{e:Q}
\Q = \left\{ [\zeta_0,\dots,\zeta_n]\in \cx\mathbb P^{n} :
H(\zeta,\bar \zeta) =0  \right\}.
\end{equation}
Clearly, $\Q\subset\CP{n}$ is a compact smooth real algebraic Levi
nondegenerate hypersurface with $(k,l)$ being the signature of its
Levi form. In particular, the unit sphere $S^{2n-1}\subset\CC{n}$
is an $(n-1,0)$-hyperquadric.

Let $M$ be a connected smooth real analytic hypersurface in
$\cx^n$, $n>1$. It was shown by S.~Pinchuk for $\mathcal
Q=S^{2n-1}$ \cite{Pi1}, and by D.~Hill and the second author
\cite{HiSh} for the general case that if $M$ is Levi nondegenerate
then a germ of a local biholomorphic map $f: M \to \Q$ extends
locally biholomorphically along any path on $M$ with the extension
sending $M$ to $\Q$. This leads to the following definition: a
Levi nondegenerate hypersurface $M$ is called $(k,l)$-{\it
spherical} at a point $p\in M$ if there exists a germ at $p$ of a
biholomorphic map $f$ sending the germ $(M,p)$ of $M$ at $p$ onto
the germ of a $(k,l)$-hyperquadric $\Q$ at $f(p)$. It follows then
that a Levi nondegenerate hypersurface $M$ is $(k,l)$-spherical at
one point iff it has this property at all points and we simply
call $M$ a {\it $(k,l)$-spherical} hypersurface. Similar extension
result holds if instead of Levi nondegeneracy one assumes that $M$
is {\it essentially finite}, a condition on the so-called Segre
map of $M$ generalizing Levi nondegeneracy, see \cite{Sh} and
\cite{HiSh}. Using arguments similar to those in \cite{HiSh} one
can further generalize Pinchuk's theorem to the case when $M$ is
merely \it minimal \rm in the sense of Tumanov\,\cite{tumanov},
i.e., when $M$ does not contain any germs of complex
hypersurfaces, see \cite{ShVe}.

In this paper we study the analytic continuation phenomenon for
biholomorphic maps from a {\it nonminimal} real-analytic
hypersurface $M$, i.e., when $M$ contains a complex hypersurface
$X$. In this case the Levi form of $M$ vanishes identically on
$X$, and $M$ is not essentially finite at points in $X$. Also note
that nonminimality is equivalent to the {\it infinite type}
condition in the sense of Kohn and Bloom-Graham (see,
e.g.,~\cite{ber}). In appropriate local coordinates $(z,w)
\in\CC{n-1}\times \CC{}$ near the origin a real-analytic
nonminimal hypersurface $M$ is given by
$$
\im w=(\re w)^m\Phi(z,\bar z,\re w),
$$
where $\Phi(z,\bar z,\re w)$ is a real-analytic function
satisfying certain reality condition, $m \geq 1$ is an integer,
and $X=\{w=0\}\subset M$ is the complex hypersurface.

Within the study of CR invariants of real hypersurfaces, the nonminimal case
is considered to be particularly difficult, and very little is known in this setting.
Some motivational examples and partial results concerning automorphism groups
were obtained by V.~Beloshapka~\cite{belnew}, P.~Ebenfelt, B.~Lamel and D.~Zaitsev~\cite{elz},
M.~Kolar and B.~Lamel~\cite{lamel}, see also references therein.  Further, in \cite{lamel}
the authors give normal form in the so-called \it ruled \rm case, i.e., when
the  function $\Phi(z,\bar z,\re w)$ is independent of $\re w$ and $m=1$. This is somewhat
analogous to the \it rigid \rm case for finite type hypersurfaces. However, in the general
case, the $z$- and the $w$-variables in the expansion of $\Phi(z,\bar z,\re w)$ can mix,
which prevents the use of the Chern-Moser-type machinery for the construction of the normal
form. It becomes apparent that different methods should be employed for the study of
nonminimal hypersurfaces. In this paper we use the approach of analytic continuation as a
tool for propagation of local CR invariants of real hypersurfaces and determination of CR
invariants at nonminimal points.

Our results show that the nonminimal case is quite different from the minimal one,
as new geometric phenomena occur. The following illuminating example shows that the analytic
continuation fails in general in the nonminimal case. The hypersurface $M^{\rm log}$ appeared
first in 2001 in the work of V.~Beloshapka and A.~Loboda as an example of an infinite type
hypersurface with a large automorphism group.

\begin{ex}
Consider the real-analytic hypersurface given by
\begin{equation}
M^{\rm log} =\left\{(z,w=u+iv)\in \cx^2 : v=u\,\tan |z|^2,\ |z|<1
\right\},
\end{equation}
or by a global ``complex defining equation" $w=\bar we^{2iz\bar
z}.$ Note that $M^{\rm log}$ contains the complex hypersurface
$X=\{w=0\}$, but it is Levi nondegenerate at all other points. The
set $X$ divides $M^{\rm log}$ into two connected components: $M^+$
given by $\{u>0\}$ and $M^-$ given by $\{u<0\}$. It follows that
\[
M\setminus X = \left\{\arctan \frac{v}{u}=|z|^2\right\},
\]
and so for $u>0$, we have ${\rm Im\,} (\ln w)=|z|^2$. This shows
that the map
 $F:\,\{u>0\}\to \CC{2}$ given by
 $$
 z^*= z,\,w^*=\ln w, \ \ \ -\frac{\pi}{2}<{\rm Arg} \,w<\frac{\pi}{2} ,
 $$
maps $M^+$ onto an open subset of the  nondegenerate hyperquadric
$$
\Q=\{(z^*, w^*)\in \CC{2} : \im w^*=|z^*|^2\}.
$$
However, $F$ clearly does not extend across $X$ (neither
holomorphically nor as a holomorphic
 correspondence). In fact, the branch $F^-:\,\{u <0\} \to \CC{2}$ of the multiple-valued map
 $z^*= z,\,w^*=\ln w$ satisfying $\frac{\pi}{2}<\mbox{Arg}\,w<\frac{3\pi}{2}$ sends $M^-$ into
 an open subset of a {\it different} hyperquadric
 $$
 \mathcal{\tilde Q}=\{(z^*, w^*)\in \CC{2} : \im w^*-\pi =|z^*|^2\}.
 $$
 \end{ex}

 This, and examples given in  \cite{belnew}, \cite{elz}, and \cite{lamel}, as well as other examples given
 later in this paper suggest that given a nonminimal hypersurface $M$ containing a complex hypersurface
 $X$, and a local map $f$ from $M$ into a hyperquadric $\mathcal Q$, one cannot expect in general that $f$
 extends holomorphically to $X$, and since the complement of $X$ is not simply connected, the analytic
 continuation, if exists, can lead to a multiple-valued extension of $f$. Furthermore, since $M\setminus X$ is
 not connected, different components of $M\setminus X$ can be mapped by different branches of the
 multiple-valued extension into different hyperquadrics.

 Our principal result establishes such multiple-valued holomorphic extension phenomenon.
We call a real hypersurface $M$ containing a complex hypersurface
$X$ \it pseudospherical \rm if at least one of the components of
$M\setminus X$ is $(k,l)$\,-\,spherical for some $k+l=n-1,\,0\leq
l\leq k\leq n-1$.

\begin{thm1}\label{t.1}
Let $M\subset\CC{n}$ be a connected smooth real-analytic
hypersurface containing a complex hypersurface $X$. Assume that
$M\setminus X$ is Levi nondegenerate, and that $M$ is
pseudospherical.  Then:

\smallskip

(i)  There exists an open neighbourhood $U$ of $X$ in $\mathbb C^n$ such that for $p\in
(M \setminus X)\cap U$  any biholomorphic map $f$ of $(M,p)$ into
a $(k,l)$\,-\,hyperquadric $\Q$ extends analytically   along any
path in $U\setminus X$ as a locally biholomorphic map into
$\CP{n}$. In particular, $f$ extends to a possibly multiple-valued
locally biholomorphic analytic mapping $U\setminus
X\longrightarrow\CP{n}$ in the sense of Weierstrass.

\smallskip

(ii) If one of the components of $M\setminus X$ is
$(k,l)$\,-\,spherical, then the second component is
$(k',l')$\,-\,spherical with, possibly, $(k,l)\neq(k',l')$.
\end{thm1}

Somewhat surprisingly,  Example~\ref{ex.nonsh} of Section~6 gives
a real hypersurface $M \subset \mathbb C^3$ for which
$(k,l)\neq(k',l')$ in part (ii) of Theorem 1. However, if $f$
extends to $U\setminus X$ as a single-valued map, then both
components of $M\setminus X$ have the same signature of the Levi
form as shown in Proposition~\ref{p.kl}.

The nature of multiplicity of the extension in the above theorem
depends only on the geometry of the hypersurface $M$, and does not
depend on the choice of the map $f$. In Section~7 we give a
precise description of the monodromy of analytic continuation of
$f$ about $X$ by constructing explicitly the monodromy operator,
in analogy with the  corresponding theory of singular ODEs. To
suppress technical details we give a simplified formulation of our
result below and refer the reader to  Section~7 for further
details.

\begin{thm1}\label{t.2}
Under the conditions of Theorem~\ref{t.1}, there exists a linear
representation
$$
\phi : \pi_1 (U\setminus X) \longrightarrow {\rm Aut} (\CP{n})
$$
 such that the analytic continuation $\tilde f$ of $f$ along a
cycle $\gamma\subset U \setminus X$ satisfies
$$
\tilde f = \phi (\gamma) \circ f.
$$
The cyclic subgroup $\phi(\pi_1 (U\setminus X)) \subset {\rm Aut}
(\CP{n})$ is determined by $M$ uniquely up to conjugation, and the
conjugacy class is a biholomorphic invariant of $M$.
\end{thm1}

\noindent{\bf Remark 3.} Additional motivation for the study of mappings from nonminimal
hypersurfaces into quadrics comes from the fact that only the hypersurfaces that are Levi
nondegenerate and $(k,l)$-spherical outside the complex locus $X$ admit "large" automorphism
groups. In particular, for $n=2$ these are the only hypersurfaces that admit the
estimate $\mbox{dim\,Aut}(M,0)\geq 5$ for the local automorphism
group (see \cite{obzor} for details, and \cite{lamel} for some
examples).

\bigskip

The paper is organized as follows. In Section 2 we provide the
necessary background about Segre varieties and the associated
notion of Segre map, and prove the local one-to-one property of
the Segre map in $U\setminus X$ for hypersurfaces under
consideration. In Section 3 we apply this property to study the
behaviour of \it Segre sets \rm and show that \it any \rm point in
the punctured neighbourhood $U\setminus X$ can be connected with
$q\in M\setminus X$ by  a chain of Segre varieties. We use this
fact to construct in Section~4 the desired analytic continuation
along some specific paths by means of extension along Segre
varieties, in the spirit of K.~Diederich and J.E.~Forn\ae ss
\cite{DiFo}, K.~Diederich and S.~Pinchuk \cite{DiPi}, and
\cite{Sh}. We also introduce the notion of $\Q$-Segre property for
a map $f$ and use it for appropriate understanding of the
extension along (iterated) Segre varieties. Combining the results
of Sections~3 and~4, we prove a crucial corollary establishing analytic continuation of
the initial germ $F_0$ to an \it arbitrary \rm
point $r\in U\setminus X$ along some specific path. In Section 5
we use this result to prove the continuation along an arbitrary
path, using the global nature of the (complexified) automorphism
group of the target hyperquadric $\Q$, which gives the first part
of our principal result. We also provide a number of examples of
analytic maps that extend germs of local
biholomorphic mappings to a hyperquadric. Most of these are
certain blow-ups of the unit 3-sphere giving both single-valued and
multiple-valued maps. In Section~6 we prove the second
part of Theorem~\ref{t.1} and give an example of $M$ that can be
mapped to {\it inequivalent} hyperquadrics. In Section~7 we
describe the monodromy of the obtained multiple-valued map showing
that the monodromy can be expressed in terms of a (scaled) element
of $\mbox{GL}_{n+1}(\CC{})$ - the monodromy matrix. We also
establish an intriguing  connection between nonminimal
pseudospherical hypersurfaces and linear differential equations of
order $n$ with an isolated singular point by proving the \it
Monodromy formula \rm for the multiple-valued mapping $F$. The
hypersurface $X\subset M$, playing the role of an isolated
singularity for holomorphic maps under consideration, becomes an
analogue of a single point in $\CP{1}$ as an isolated singularity
of linear differential equations. In Section~8 we consider
separately the case where $M$ is algebraic and prove that the
multiple-valued mapping $F$ in this case extends to $X$, either
holomorphically or as a holomorphic correspondence.

\medskip

{\bf Acknowledgments.} We would like to thank V.~Beloshapka and
B.~Lamel for fruitful discussions that motivated in many aspects
the current research project. The second author is partially
supported by the Natural Sciences and Engineering Research Council
of Canada.

\section{Background: Segre varieties}\label{s:SV}

Let $M$ be a smooth real analytic hypersurface in $\cx^n$, $n\geq
2$, $0\in M$, and $U$ a neighbourhood of the origin. In what
follows in this paper we consider only connected real
hypersurfaces. If $U$ is sufficiently small then $M\cap U$ admits
a real analytic defining function $\rho(Z,\overline Z)$, where the
function $\rho(Z,W)$ is holomorphic in $U\times \overline U$, and
for every point $\zeta\in U$ we can associate to $M$ its so-called
Segre variety in $U$ defined as
\begin{equation}
Q_\zeta= \{Z\in U : \rho(Z,\overline \zeta)=0\}.
\end{equation}
Note that Segre varieties depend holomorphically on the variable
$\overline \zeta$. In fact, we may find  a suitable pair of
neighbourhoods $U_2={\ U_2^z}\times U_2^w\subset \cx^{n-1}\times
\cx$ and $U_1 \Subset U_2$ such that
\begin{equation}\label{product}
  Q_\zeta=\left \{(z,w)\in U^z_2 \times U^w_2: w = h(z,\overline \zeta)\right\}, \ \ \zeta\in U_1,
\end{equation}
is a closed complex analytic subset. Here $h$ is a holomorphic
function. Following \cite{DiPi} we call $U_1, U_2$ a {\it standard
pair of neighbourhoods} of the origin. From the definition and the
reality condition on the defining function the following basic
properties of Segre varieties easily follow:
\begin{equation}\label{segre1}
  Z\in Q_\zeta \ \Leftrightarrow \ \zeta\in Q_Z,\quad Z,\zeta\in U_1,
\end{equation}
\begin{equation}\label{segre2}
  Z\in Q_Z \ \Leftrightarrow \ Z\in M, \quad Z,\zeta\in U_1,
\end{equation}
\begin{equation}\label{segre3}
  \zeta\in M \Leftrightarrow \{Z\in U_1: Q_\zeta=Q_Z\}\subset M,\quad\zeta\in U_1.
\end{equation}
The set $I_\zeta:=\{Z\in U_1: Q_\zeta=Q_Z\}$ is also a complex
analytic subset of $U_1$. If $M$ contains a complex hypersurface
$X$, then for any $p\in X$, $Q_p = X$ and $Q_p\cap
X\neq\emptyset\Leftrightarrow p\in X$, so $I_p = X$.

If $f: U \to U'$ is a holomorphic map sending a smooth real
analytic hypersurface $M\subset U$ into another such hypersurface
$M'\subset U'$, and $U$ is as in \eqref{product}, then
\begin{equation}\label{e:inv}
f(Z)=Z' \ \ \Longrightarrow \ \ f(Q_Z)\subset Q'_{Z'},
\end{equation}
for $Z$ close to the origin. The invariance property of Segre
varieties will play a fundamental role in our arguments. For the
proofs of these and other properties of Segre varieties see, e.g.,
\cite{DiFo}, \cite{DiPi}, \cite{DiPi2}, or \cite{ber}.

The space of Segre varieties $\{Q_Z : Z\in U_1\}$ can be
identified with a subset of $\cx^N$ for some $N>0$ in such a way
that the so-called Segre map $\lambda : Z \to Q_Z$ is holomorphic
(see \cite{DiFo}). Since we have $Q_p=X$ for any $p\in X$, the
Segre map $\lambda$ sends the entire $X$ to a unique point in
$\CC{N}$ and, accordingly, $\lambda$ is not even finite-to-one
near each $p\in X$ (i.e., $M$ is \it not essentially finite \rm at
points $p\in X$). On the other hand, if $M$ is Levi nondegenerate
at a point $p$, then its Segre map is one-to-one in a
neighbourhood of $p$. In fact, the last property can be
strengthened as follows.

\begin{propos}\label{p.21}
Let $M\subset\CC{n}$ be a smooth real-analytic hypersurface,
containing a complex hypersurface $X$, $0\in X\subset M$. Suppose
that $M\setminus X$ is  Levi nondegenerate. Then a standard pair
of neighbourhoods $(U_1,U_2)$ for $ 0 \in M$ can be chosen in such
a way that the Segre map $\lambda:\,U_1 \longrightarrow \CC{N}$ is
locally injective at any point $p\in U_1\setminus X$.
\end{propos}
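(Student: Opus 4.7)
The strategy is to upgrade the pointwise Levi nondegeneracy on $M\setminus X$ to a statement about the differential $d\lambda$ on all of $U_1\setminus X$, since a holomorphic map with injective differential is automatically a local embedding, and in particular locally injective.

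First, recall the classical correspondence (see \cite{DiFo}, \cite{ber}): at any Levi nondegenerate point $p\in M$ the Jacobian $d\lambda_p$ has maximal rank $n$. Set
\[
A := \{Z\in U_1 : \operatorname{rank}\, d\lambda_Z < n\},
\]
the common vanishing locus of the $n\times n$ minors of the Jacobian of $\lambda$, so $A$ is a complex analytic subset of $U_1$. Since $\lambda$ collapses $X$ to a single point, $d\lambda|_{TX}\equiv 0$; combined with $\dim_{\mathbb C} T_pX = n-1$ at $p\in X$ this forces $\operatorname{rank}\, d\lambda_p\leq 1 < n$ along $X$, hence $A\supset X$. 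Conversely $A\cap (M\setminus X)=\emptyset$ by Levi nondegeneracy.

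It then suffices to show that, after shrinking $U_1$, one has $A=X$. Assume toward contradiction that $A$ has an irreducible component $Y\neq X$ through some $p_0\in X$. Work in standard nonminimal coordinates around $p_0$, so that $M:\, \im w = (\re w)^m \Phi(z,\bar z,\re w)$ and $X=\{w=0\}$. Since $Y\not\subset X$, a one-dimensional branch of $Y$ admits a holomorphic (possibly branched) parameterization $\phi:(\mathbb{D},0)\to(Y,p_0)$ along which $w\not\equiv 0$; then $w(\phi(t))=a t^k + O(t^{k+1})$ for some $k\geq 1$, $a\neq 0$. The real-analytic function
\[
g(t) := \im w(\phi(t)) - \bigl(\re w(\phi(t))\bigr)^m\, \Phi\bigl(z(\phi(t)),\overline{z(\phi(t))},\re w(\phi(t))\bigr)
\]
satisfies $g^{-1}(0)=\phi^{-1}(M)$, and a direct calculation shows that its leading homogeneous part at $t=0$ is a nonzero real harmonic polynomial of degree $k$ in $(\re t,\im t)$: namely $\im(a t^k)$ when $m\geq 2$, and $\im\bigl((1-i\Phi(p_0))a t^k\bigr)$ when $m=1$. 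This polynomial vanishes on $2k$ rays through the origin, and a standard perturbation argument (e.g.\ Weierstrass preparation in each angular sector) promotes this to show that $g^{-1}(0)$ is itself a union of $2k$ real-analytic curves through $t=0$. Any nonzero $t$ on such a curve has $w(\phi(t))\neq 0$, so $\phi(t)\in Y\cap (M\setminus X)$, contradicting $A\cap(M\setminus X)=\emptyset$.

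The principal obstacle is this last step, the real-analytic perturbation analysis of $g$ near $t=0$ verifying the existence of zeros with $t\neq 0$. Higher-dimensional or singular components $Y$ can be reduced to the above one-dimensional branch computation by restricting to a generic complex slice along which $w\not\equiv 0$ and by using standard branched parameterizations via normalization.
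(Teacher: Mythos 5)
Your proposal is correct, and it shares the paper's skeleton: both introduce the analytic set where the rank of $d\lambda$ drops below $n$ (your $A$, the paper's $\Sigma$), note that it contains $X$ and is disjoint from $M\setminus X$ by Levi nondegeneracy, and reduce everything to showing that no component other than $X$ passes through a point of $X$. The contradiction, however, is reached by a genuinely different mechanism. The paper argues topologically and coordinate-free: since $\tilde\Sigma\cap M=\tilde\Sigma\cap X$ is complex-analytic and hence even-dimensional, while a component crossing $M$ would meet it in a real $(2d-1)$-dimensional set, each component must lie in $\overline{U^+}$ or $\overline{U^-}$; one then translates the component along the normal direction to $M$ and invokes stability of intersections of analytic sets (Chirka, using $\dim\tilde\Sigma+\dim X\geq n$) to contradict $\tilde\Sigma\cap X\neq\emptyset$. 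You instead restrict the defining function of $M$ along a curve in $Y$ through $p_0$ not contained in $X$ and observe that its leading homogeneous part is the nonzero harmonic polynomial $\im(bt^k)$, which manufactures points of $Y\cap(M\setminus X)$ directly --- the same contradiction in one computational stroke, at the cost of invoking the nonminimal normal form at $p_0$. The step you flag as the principal obstacle is in fact easier than you make it, and needs less than you claim: you do not need $g^{-1}(0)$ to be a union of $2k$ real-analytic arcs. Since $\im(bt^k)$ takes both signs on every circle $\{|t|=\epsilon\}$ and dominates the $O(|t|^{k+1})$ remainder for small $\epsilon$, the function $g$ changes sign on each such circle, and the intermediate value theorem already produces zeros with $t\neq 0$ arbitrarily close to the origin; for $0<|t|$ small one has $w(\phi(t))=at^k(1+O(t))\neq 0$, so these zeros lie in $Y\cap(M\setminus X)$, which is forbidden. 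With that simplification, together with the routine curve-selection reduction you indicate for higher-dimensional components and the same final shrinking of $U_1$ that the paper performs (every component of the germ of $A$ at $0$ passes through $0\in X$, so your dichotomy applies to each), your argument is complete. What each approach buys: yours is more elementary, avoiding the perturbation-stability machinery for intersections of analytic sets, while the paper's avoids any choice of normal form and any leading-term computation; both rest on the same standard input that the Segre map has rank $n$ at Levi nondegenerate points.
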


\begin{proof}
Denote by $\Sigma$ the set of points where the rank of the map
$\lambda$ is less than $n$.  Clearly, $\Sigma$ is a
complex-analytic subset of $U_1$, and $X\subset\Sigma$. We will
show that $U_1$ can be taken sufficiently small so that $\Sigma
\cap U_1 = X\cap U_1$. Let $\tilde \Sigma$ be any irreducible
component of $\Sigma$ of positive dimension such that $0 \in
\tilde \Sigma$. It follows from injectivity of $\lambda$ at Levi
nondegenerate points that $\tilde\Sigma\cap M\subset X$.

Let $U^+$ and $U^-$ be the two connected components of
$U_1\setminus M$. We claim that either $\tilde\Sigma\subset
\overline{U^+}$ or $\tilde\Sigma\subset \overline{U^-}$. Indeed,
suppose that on the contrary, $\tilde \Sigma\cap U^+\neq
\emptyset$ and $\tilde \Sigma\cap U^-\neq \emptyset$. Let
$d:=\mbox{dim}\,\tilde\Sigma$. First observe that
$\tilde\Sigma\cap M \not\subset\tilde\Sigma^{\mbox{sing}}$, for
otherwise the set $\tilde\Sigma^{\mbox{sing}}$ would divide
$\tilde\Sigma^{\mbox{reg}}$ into a union of two open components
(because $M$ divides $U_1$, and therefore $\tilde\Sigma\cap M$
divides $\tilde\Sigma$). This is however impossible because for
irreducible $\tilde \Sigma$, the set $\tilde\Sigma^{\mbox{reg}}$
is connected (see, e.g., \cite{Ch}). It follows that
$\tilde\Sigma\cap M$ contains regular points of $\tilde\Sigma$,
and, considering a small neighbourhood of any such point, we
conclude that the dimension of the real-analytic set
$\tilde\Sigma\cap M$ equals $2d-1$ (since this set splits
$\tilde\Sigma^{\mbox{reg}}$). On the other hand, $\tilde\Sigma\cap
X\subset\tilde\Sigma\cap M\subset\tilde\Sigma\cap X$ from the
above arguments, so $\tilde\Sigma\cap M=\tilde\Sigma\cap X$, which
shows that the dimension of $\tilde\Sigma\cap M$ can not be odd.
That proves the claim.

Now if, for example, $\tilde\Sigma\subset \overline{U^+}$, we can
move $\tilde\Sigma$ along the normal direction to $M$ at $0$ and
get $\tilde\Sigma\cap W\subset U^+$ for the perturbed set
$\tilde\Sigma$ and a sufficiently small neighbourhood $W$ of the
origin. This means that $\tilde\Sigma\cap X\neq\emptyset,$ though
for the perturbed set $\tilde\Sigma\cap X=\emptyset$, which is a
contradiction, because $\dim \tilde\Sigma + \dim X \ge n$ and
therefore their intersection is stable under small perturbations
(\cite{Ch}).

From the above we conclude  that all components of $\Sigma$,
different from $X$, do not intersect $X$. The zero-dimensional
components of $\Sigma$ do not accumulate at 0, and therefore, we
may choose the neighbourhood $U_1$ so small that $\Sigma=X\cap
U_1$, as required.
\end{proof}\rm

Proposition~\ref{p.21} motivates the following

\begin{dfn}
A smooth real-analytic hypersurface $M$, containing a complex
hypersurface $X\ni 0$, is called \it Segre-regular in a
neighbourhood $U$ of the origin, \rm if the Segre map $\lambda$ of
$M$ is locally injective at each point $p\in U\setminus X$.
\end{dfn}

We immediately conclude from Proposition 2.1, that \it for a
smooth real-analytic hypersurface $M$, containing a complex
hypersurface $X\ni 0$ and Levi nondegenerate in $U\setminus X$, a
standard pair of neighbourhoods $U_1,U_2$ of the origin can be
chosen in such a way that $M$ is Segre-regular in $U_1$. \rm

The Segre-regularity will be the basic assumption for most of the
statements in this paper. We note, once again, that for a
Segre-regular in a neighbourhood $U$ hypersurface the image
$\lambda(X)$ consists of a unique point in $\cx^{N}$ and near all
points $p\in U\setminus X$ the map $\lambda$ is one-to-one.

Finally we describe the geometry of Segre varieties for the
nondegenerate hyperquadric $\Q$ in the target domain. In this
case, the Segre variety of a point $[\zeta_0,...,\zeta_n]
\in\CP{n}$ is the projective hyperplane
$$
Q'_\zeta = \{[\xi_0,...,\xi_n]\in \cx\mathbb P^n:
H(\xi,\bar\zeta)=0\},
$$
and the set $\{Q'_\zeta,\,\zeta\in\CP{n}\}$ of all Segre varieties
coincides with the space $(\CP{n})^*$ of all projective
hyperplanes in $\CP{n}$. The Segre map $\lambda'$ in this case is
a global natural one-to-one correspondence between  $\CP{n}$ and
the space $(\CP{n})^*$.

\section{Exhaustion of a punctured neighbourhood by Segre sets}

Let $M,X,\,U_1,\,U_2$ be as in Section 2. Following \cite{ber}, we
introduce the \it Segre sets \rm of $M$ in a neighbourhood of the
origin. We choose $q\in U_1$ and define the zero and the first
Segre sets $S^q_0,\,S^q_1$ of $q$ simply as $S^q_0:=\{q\}$ and
$S^q_1:=Q_q\cap U_1$. Higher order Segre sets $S^q_j,\,j\geq 2$
are defined by induction as
$$
S^q_j:=\left(\bigcup\limits_{r\in S^q_{j-1}}Q_r\right)\cap U_1.
$$
Finally, we define $S^q_\infty:=\bigcup\limits_{j\geq 0}
S^q_{2j}$. For $q\in X$ we have $S^q_j=X\cap U_1$ for any $j\geq
0$. As it is shown in \cite{ber}, Segre sets have the following
properties.

\begin{enumerate}
\item[(a)] $S^q_j\subset S^q_{j+2}$ for $q\in U_1$ and $S^q_j\subset
S^q_{j+1}$ for $q\in M\cap U_1$.
\item[(b)] $r\in S^q_j\Leftrightarrow q\in S^r_j$ and so $r\in
S^q_\infty\Leftrightarrow q\in S^r_\infty$.
\item[(c)] $S^q_j$ can be presented as $\pi (\sigma^q_j)$, where
$\sigma^q_j\subset\CC{N}$ is a complex submanifold ($N>n$), and
$\pi :\,\CC{N}\longrightarrow\CC{n}$ is a holomorphic projection.
\end{enumerate}

In this section we show that the open connected set $U_1\setminus
X$ can be exhausted by the even Segre sets $\{S^p_{2j}\}_{j\geq
1}$ for any fixed $p\in U\setminus X$.

\begin{propos}
Let $M\subset\CC{n}$ be a smooth real-analytic hypersurface,
containing a complex hypersurface $X\ni 0$, and $U_1,U_2$ be the
standard pair of neighbourhoods for $M$ at the origin. Suppose
that $M$ is Segre-regular in $U_1$. Then for any $q\in
U_1\setminus X$,
$$
S^q_\infty=U_1\setminus X.
$$
\end{propos}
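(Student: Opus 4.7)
The inclusion $S^q_\infty \subset U_1\setminus X$ is immediate: for $\zeta\in X$ we have $Q_\zeta = X$, so if some $r\in Q_q$ lay in $X$ then by \eqref{segre1} we would have $q\in Q_r = X$, contradicting $q\notin X$. Hence $Q_q\cap X=\emptyset$, and inductively every $S^q_{j}$ avoids $X$. For the reverse inclusion I would argue topologically. Since $X$ is a complex hypersurface (real codimension two) in $U_1$, the punctured neighborhood $U_1\setminus X$ is connected, and $q\in S^q_0\subset S^q_\infty$; so it suffices to prove that $S^q_\infty$ is both open and closed in $U_1\setminus X$.

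The algebraic backbone is a chain-composition / transitivity lemma: if $p\in S^q_{2j}$ and $r\in S^p_{2k}$, concatenating the two chains of Segre varieties gives $r\in S^q_{2(j+k)}$. Together with the symmetry property (b), this shows $S^p_\infty = S^q_\infty$ for every $p\in S^q_\infty$, so $S^q_\infty$ is the equivalence class of $q$ under the relation $p\sim q \Leftrightarrow p\in S^q_\infty$. This reduces the proposition to the key \emph{local exhaustion} claim: \emph{for every $p\in U_1\setminus X$, the second Segre set $S^p_2 = \bigcup_{r\in Q_p}Q_r$ contains an open neighborhood of $p$ in $U_1$.} One uses the equivalence $p'\in S^p_2 \Leftrightarrow Q_p\cap Q_{p'}\cap U_1\neq\emptyset$ (from $r\in Q_p\cap Q_{p'}\Rightarrow p'\in Q_r\subset S^p_2$); in the parametrization $Q_\zeta=\{w=h(z,\bar\zeta)\}$ from \eqref{product} the intersection is cut out by the holomorphic equation $h(z,\bar p)=h(z,\bar{p'})$, which by Segre-regularity is a nontrivial deformation of the zero function as $p'$ departs from $p$, and whose zero variety persists with the expected complex codimension one and meets $U_1$ for $p'$ near $p$.

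Granted this, openness of $S^q_\infty$ is automatic: for $p\in S^q_\infty$ one has $S^p_\infty=S^q_\infty$ and $S^p_2$ supplies a neighborhood of $p$. For closedness in $U_1\setminus X$, take $p_n\in S^q_\infty$ with $p_n\to p\in U_1\setminus X$; the neighborhood of $p$ contained in $S^p_2$ eventually traps the $p_n$, so $p_n\in S^p_\infty$ and by symmetry $p\in S^{p_n}_\infty=S^q_\infty$. Connectedness of $U_1\setminus X$ then yields $S^q_\infty = U_1\setminus X$. The step I expect to be the main obstacle is the local exhaustion claim: Segre-regularity only asserts local injectivity of $\zeta\mapsto Q_\zeta$, and one must upgrade this to the geometric assertion that two distinct nearby Segre hypersurfaces genuinely intersect \emph{inside} the restricted domain $U_1$ with the correct generic complex dimension $n-2$, so that $\bigcup_{r\in Q_p}Q_r$ really fills out a full $n$-dimensional neighborhood of $p$ rather than remaining confined to a proper subvariety.
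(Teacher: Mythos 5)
There is a genuine gap, and it sits exactly where you flagged it: your key ``local exhaustion claim'' --- that $S^p_2$ contains an open neighbourhood of $p$ --- is false, and the paper itself contains the counterexample. For $M^{\rm log}$ with $p=(0,1)$, Example~3.2 of the paper computes $S^p_2=(\CC{2}\setminus X)\setminus\{z=0,\,w\neq 1\}$: the second Segre set omits an entire punctured complex line through $p$, so it contains no neighbourhood of $p$; only $S^p_4$ does. The mechanism is visible in your own reduction $p'\in S^p_2\Leftrightarrow Q_p\cap Q_{p'}\cap U_1\neq\emptyset$: for $M^{\rm log}$ one has $Q_\zeta=\{w=\bar\zeta_2e^{2iz\bar\zeta_1}\}$, and for $p'=(p_1,p_2')$ with $p_2'\neq p_2$ the difference $h(z,\bar p')-h(z,\bar p)=(\bar p_2'-\bar p_2)e^{2iz\bar p_1}$ is a \emph{nonvanishing} function --- a nontrivial deformation of the zero function whose zero variety is empty. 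Segre-regularity (local injectivity of $\lambda$) only guarantees $Q_{p'}\neq Q_p$; it says nothing about $Q_{p'}\cap Q_p\neq\emptyset$, and two distinct nearby Segre hypersurfaces can be ``parallel'' inside $U_1$. So the persistence argument you invoke has no valid starting point when $p'=p$ (where $Q_p\cap Q_{p'}=Q_p$ is not a proper intersection), and openness and closedness of $S^q_\infty$, as you derive them, both collapse.

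The paper's proof repairs precisely this defect by working one depth deeper. It first shows $S^q_2$ is open at points $r\in S^q_2$ near $q$ with $r\neq q$: there one already has $Q_r\cap Q_q\neq\emptyset$ \emph{and} $Q_r\neq Q_q$ (the latter from Segre-regularity), and since $\dim Q_r+\dim Q_q\geq n$ the nonempty intersection is stable under small perturbation of $r$ (Chirka) --- this is where your stability argument is legitimate, because nonemptiness is hypothesis rather than conclusion. Then, taking a ball $B\subset S^q_2$ around such an interior point, continuity of $\lambda$ gives $S^r_2\cap B\neq\emptyset$ for all $r$ near $q$, and chaining $r\in S^{r'}_2$, $r'\in S^q_2$ yields $r\in S^q_4$; thus $q$ is interior to $S^q_4$, and iterating, every point of $S^q_{2j}$ is interior to $S^q_{2j+4}$, so $S^q_\infty$ is open. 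Your overall skeleton (partition of $U_1\setminus X$ into the classes $S^q_\infty$ via property (b), then connectedness) agrees with the paper's, and your separate closedness argument is in fact unnecessary: since the classes form a partition into open sets, connectedness of $U_1\setminus X$ alone forces a single class. But without replacing the false $S^p_2$-neighbourhood claim by the two-step $S^q_4$ argument (or an equivalent), the proof does not go through.
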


\begin{proof}
Property (b) above shows that for any two Segre sets
$S^q_\infty,\,S^r_\infty,\,q,r\in U_1$ either
$S^q_\infty=S^r_\infty$ or $S^q_\infty\cap S^r_\infty=\emptyset$
holds. So $U_1\setminus X$ can be represented as a disjoint union
of some $S^q_\infty,\,q\in U_1\setminus X$ (since each $q\in
S^q_2$). The proposition now asserts that, in fact, this disjoint
union consists of just one element $S^q_\infty$.

We first claim that every $S^q_2,\,q\in U_1\setminus X$, is open
at any point $r\in S^q_2$,  sufficiently close to $q$ except,
possibly, the point $r=q$. Indeed, let $U(q)$ be a neighbourhood
of $q$ such that the Segre map $\lambda$ is one-to-one in $U(q)$.
Take any point $r\in S^q_2$ so that $r\neq q$ and $r\in U(q)$.
Then $r\in Q_s,\,s\in Q_q \cap Q_r$, in particular, $Q_r\cap
Q_q\neq\emptyset$. The injectivity of $\lambda$ in $U(q)$ implies
that $Q_r\neq Q_q$. A sufficiently small perturbation of $r$ does
not change the properties $Q_r\neq Q_q$ (from the definition of
$U(q)$) and $Q_r\cap Q_q\neq\emptyset$ (as in the proof of
Proposition 2.1, we use the fact that the sum of the dimensions of
these two analytic sets is at least $n$ and refer to \cite{Ch}).
So for any $r'$, sufficiently close to $r$, there exists a point
$s'$ such that $s'\in Q_q$ and $s'\in Q_{r'}$, so that $r'\in
Q_{s'}$ and $r'\in S^q_2$, as required. This proves the claim.

Now take any $S^q_\infty,\,q\in U_1\setminus X$ and consider an
interior point $q'\in S^q_2$. Take a ball $B$, centred at $q'$
and such that $B\subset S^q_2$. Then for all $r$ sufficiently
close to $q$ we have $S^r_2\cap B\neq \emptyset$ (this follows
from the continuity of the map $\lambda:\, z\longrightarrow Q_z$).
Therefore, there exists $r'\in B$ such that $r'\in S^r_2$. We get
the inclusions $r\in S^2_{r'},\,r'\in S^2_q$ which imply, by
definition of Segre sets, the inclusion $r\in S^q_4$ for all $r$
sufficiently close to $q$. This shows that $q$ is an interior
point of $S^q_4$.

Taking a point $s\in S^q_{2j}$ for some $j\geq 0$, we use a
similar argument to conclude from openness of $S^q_4$ at $q$, that
$s$ is an interior point of $S^q_{2j+4}$. This finally shows that
all points of the set $S^q_\infty$ are in fact its interior points
and so $S^q_\infty$ is an open set. The connectivity of
$U_1\setminus X$ now implies that
 the decomposition of $U_1\setminus X$ into Segre sets consists of a unique element and
$U_1\setminus X=S^q_\infty$ for any $q\in U_1\setminus X$,  as
required.
\end{proof}

\begin{ex}
For the hypersurface $M^{\rm log}$ (see Introduction) we may
choose $U_1=U_2=\CC{2}$ and $p=(0,1)\in M$. Then $M$ is
Segre-regular in $U_1$ and simple computations show:
$$S^p_0=p,\,S^p_1=\{w=1\},\,S^p_2=(\CC{2}\setminus
X)\setminus\{z=0,\,w\neq 1\},\,S^p_3=S^p_4=...=\CC{2}\setminus X.
$$
It is also not difficult to see, that taking
$U_1=\{|z|<\varepsilon,\,|w|<\varepsilon\}$ and
$p=(\frac{\varepsilon}{2},0)$, all points, lying in the $j$-th
Segre set $S^p_j$, satisfy: $|w|\geq \frac{1}{2}\varepsilon
e^{-2j\varepsilon^2}$. This inequality shows that no Segre set of
a fixed "depth" $j$ can a priori exhaust the punctured
neighbourhood $U_1\setminus X$ for a nonminimal Segre-regular
hypersurface $M$.
\end{ex}

\section{Extension along Segre varieties}

The result of the previous section, showing that iterated Segre
varieties of a fixed point $p\in M\setminus X$ exhaust the
punctured neighbourhood $U_1\setminus X$, suggests that  the
desired continuation of a given local biholomorphic map $F$ of $M$
into a quadric $\Q$ can be obtained by extending $F$ along
iterated Segre varieties of the point $p$. The extension along
Segre varieties is based on their invariance property
\eqref{e:inv} and gives an effective way of holomorphic
continuation for holomorphic maps of real submanifolds in complex
spaces (see \cite{Sh}, \cite{HiSh}).

Let $M,X,U_1,U_2$ be as in Section 2, with $0\in X \subset M$. Let
$p\in (M\setminus X)\cap U_1$. We first introduce the following
notation: by $Q_{p_0,p_1,...p_{j-1}}$ we denote the Segre variety
$Q_{p_{j-1}}$, where $p_0:=p,\,p_k\in Q_{p_{k-1}},\,k=1,2,...,j-1$
so that $p_k\in S^p_k,\,k=0,1,2,...,j-1$ and
$Q_{p_0,p_1,...p_{j-1}}\subset S^p_j$. In this section we show
that a local biholomorphic map $F$, sending the germ of $M$ at a
point $p\in M\setminus X$ into a hyperquadric, can be extended, in
a certain sense, to a neighbourhood of any
$Q_{p_0,p_1,...p_{j-1}}$. For $j=1$ the Segre variety $Q_{p_0}$
contains $p$ and the extension can be understood naturally, while
for $j\geq 2$ the meaning of extension will be specified.

Let $r\in U_1\setminus X$, $U(r)\subset U_1\setminus X$ be an open
polydisc, centred at $r$, and $F:\,U(r)\longrightarrow\CP{n}$ be
a biholomorphic map onto its image. For $q\in U(r)$ and $s\in Q_q$
so that $q\in Q_s$, we denote by $(Q_s)^c$ the connected component
of $Q_s\cap U(r)$, containing $q$. We say that {\it $F$ has the
$\Q$-Segre property in $U(r)$}, if for any $q\in U(r)$ and $s\in
Q_q$ the image $F((Q_s)^c)$ is contained in a projective
hyperplane in $\CP{n}$. The definition is motivated by the fact
that quadrics are the only hypersurfaces in $\CP{n}$ for which the
Segre varieties are projective hyperplanes.

We now formulate the key claim for the main result of this paper.

\begin{propos}
Let $M\subset \CC{n}$ be a smooth real-analytic hypersurface,
$X\subset\CC{n}$ be a complex hypersurface, $0\in X\subset M$,
$(U_1,U_2)$ be a standard pair of neighbourhoods at the origin.
Suppose that $M$ is Segre-regular in $U_1$ and that for some point
$p\in M\setminus X$ and a polydisc $U(p)$, centred at $p$, there
exists a biholomorphic map $F_0:\,U(p)\longrightarrow\CP{n}$ such
that $F_0(M\cap U(p))\subset\Q $, where $\Q\subset\CP{n}$ is a
nondegenerate real hyperquadric. Then there exist connected
neighbourhoods $W_1,W_2,...W_j$ of
$Q_{p_0},Q_{p_1},...,Q_{p_{j-1}}$ respectively (so that $p_k\in
W_k,\,k=0,1,...,j-1$) and locally biholomorphic maps
$$
F_1,F_2,...,F_j,\,\ \ F_k:\,W_k\longrightarrow\CP{n}, \ \
k=1,2,...,j,
$$
such that:

\begin{enumerate}

\item[(i)] The intersection $U(p)\cap W_1$ contains a polydisc $W_0$, centred at $p_0$, such that $F_1$
is a holomorphic extension of $F_0|_{W_0}$.

\item[(ii)] For each $k=2,..,j$ the intersection $W_{k-2}\cap W_k$ contains a polydisc $U (p_{k-2})$,
centred at $p_{k-2}$,  such that $F_k$ is a holomorphic extension
of $F_{k-2}|_{U(p_{k-2})}$.

\item[(iii)]  For each $r\in Q_{p_k},\,k=0,1,...,j-1$ there exists a polydisc $U(r)\subset W_{k+1}$ such that
$F_{k+1}|_{U(r)}$ has the $\Q$-Segre property in $U(r)$.

\end{enumerate}
\end{propos}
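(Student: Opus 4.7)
The plan is to prove Proposition~4.1 by induction on $k$, with both the base case $k=1$ and the inductive step reducing to a single construction: extension of a locally biholomorphic map enjoying the $\Q$-Segre property along a Segre variety through a point where it is already defined. The mechanism powering such an extension is the interplay between the Segre map $\lambda$ of $M$, which is locally injective on $U_1\setminus X$ by Segre-regularity (Proposition~\ref{p.21}), and the Segre map $\lambda'$ of $\Q$, which is a global biholomorphism of $\CP{n}$ onto $(\CP{n})^{*}$.

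For the base case, I first verify that $F_0$ has the $\Q$-Segre property on all of $U(p)$. From $F_0(M\cap U(p))\subset\Q$ and the invariance property \eqref{e:inv} one obtains $F_0(Q_s\cap U(p))\subset Q'_{F_0(s)}$ for $s\in M\cap U(p)$; by the uniqueness of analytic continuation, this identity persists for every $s\in U(p)$, and since each $Q'_{F_0(s)}$ is a projective hyperplane in $\CP{n}$, the property follows. The extension along $Q_{p_0}$ is then performed as follows. The map $F_0$ induces a holomorphic map $F_0^{*}$ on the level of Segre varieties by the rule $F_0^{*}(Q_z)=Q'_{F_0(z)}$. The plan is to set $F_1:=(\lambda')^{-1}\circ F_0^{*}\circ\lambda$ on a neighbourhood $W_1$ of $Q_{p_0}$; explicitly, for $z$ near a point $r\in Q_{p_0}\setminus X$, the value $F_1(z)$ is characterised as the unique point in $\CP{n}$ whose Segre hyperplane matches the family $\{F_0^{*}(Q_s)\}$ where $s$ ranges over points of $Q_z$ close to $p_0$ for which $F_0$ was originally defined. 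Local injectivity of $\lambda$ at $r$ ensures that this family of hyperplanes varies non-trivially in all directions, so its intersection is a single point, and the resulting $F_1$ is locally biholomorphic. Conditions (i) and (iii) for $k=1$ follow from the construction: the formula returns $F_0$ where both are defined, and the $\Q$-Segre property is built into the definition.

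The inductive step is structurally identical. Assume $F_{k-2}$ has been constructed near $p_{k-2}$ with the $\Q$-Segre property on a polydisc $U(p_{k-2})$, which is part of the induction hypothesis via condition (iii) at step $k-2$ combined with (ii). I then apply the same Segre-variety extension procedure, this time along $Q_{p_{k-1}}$. The symmetry property \eqref{segre1} gives $p_{k-2}\in Q_{p_{k-1}}$, so $F_{k-2}$ is already defined at a point of $Q_{p_{k-1}}$, providing the base data for the extension. The output is a locally biholomorphic map $F_k$ on a neighbourhood $W_k$ of $Q_{p_{k-1}}$, satisfying (ii) by the agreement of formulas on $U(p_{k-2})$ and (iii) by the very form of the construction.

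The main obstacle will be making the Segre-map extension rigorous and verifying local biholomorphism of $F_k$. The crucial analytic point is that for $z$ near $r\in Q_{p_{k-1}}\setminus X$ the family of projective hyperplanes $\{F_{k-2}^{*}(Q_s):s\in Q_z,\ s\text{ close to }p_{k-2}\}$ intersects in a single point of $\CP{n}$ which depends biholomorphically on $z$. Local injectivity of $\lambda$ at $r$, guaranteed by Segre-regularity on $U_1\setminus X$, is exactly what produces a full-dimensional family of hyperplanes and hence a zero-dimensional intersection; this is also precisely why the construction must be confined to $U_1\setminus X$, as $\lambda$ collapses $X$ to a single point and the extension procedure degenerates there.
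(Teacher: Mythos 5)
Your proposal has a genuine structural gap in the inductive step. To extend along $Q_{p_{k-1}}$ you take as base data the map $F_{k-2}$ with the $\Q$-Segre property on a polydisc $U(p_{k-2})$, arguing that $p_{k-2}\in Q_{p_{k-1}}$ makes this sufficient. But the extension-along-a-Segre-variety mechanism (whether in your dual formulation via intersections of hyperplanes, or via the graph of the analytic set $\{(Z,\zeta): F(Q_Z\cap U)\subset Q'_\zeta\}$) only defines the new map at points $z$ whose Segre variety $Q_z$ actually meets the base polydisc. For $z$ near a \emph{general} point $r\in Q_{p_{k-1}}$, the symmetry \eqref{segre1} gives $p_{k-1}\in Q_r$, so $Q_z$ passes close to $p_{k-1}$ --- but not close to $p_{k-2}$, since $p_{k-2}\in Q_r$ would force $r\in Q_{p_{k-2}}$, which fails off the lower-dimensional set $Q_{p_{k-1}}\cap Q_{p_{k-2}}$. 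Thus the family $\{F_{k-2}^{*}(Q_s): s\in Q_z,\ s \text{ near } p_{k-2}\}$ you intersect is empty for most $z\in W_k$, and your construction, grounded at $p_{k-2}$, only reproduces a map on a neighbourhood of $Q_{p_{k-2}}$ (essentially $F_{k-1}$ again), never reaching $Q_{p_{k-1}}$. The paper's proof builds $F_k$ from the \emph{immediately preceding} map $F_{k-1}$, whose $\Q$-Segre property on a polydisc $U(p_{k-1})$ is supplied by condition (iii) at the previous step (note $p_{k-1}\in Q_{p_{k-2}}$). The agreement $F_k=F_{k-2}$ near $p_{k-2}$ --- your condition (ii) --- is then not ``agreement of formulas'' but a separate duality argument: one checks that $F_{k-1}(Q_Z\cap U(p_{k-1}))$ coincides with the Segre hyperplane of the already-known value of the map two steps back, so that applying the construction twice returns the original value; this uses the symmetry $Z\in Q_s \Leftrightarrow s\in Q_Z$ on both sides and is the nontrivial content of (ii).

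A secondary gap sits in your base case: you claim the $\Q$-Segre property of $F_0$ on all of $U(p)$ follows from the invariance property \eqref{e:inv} plus ``uniqueness of analytic continuation.'' Invariance yields $F_0(Q_s\cap U(p))\subset Q'_{F_0(s)}$ only for $s\in U(p)$, where $F_0(s)$ makes sense; but the $\Q$-Segre property quantifies over $s\in Q_q$ that may lie \emph{outside} $U(p)$, where there is no candidate hyperplane $Q'_{F_0(s)}$ to continue to. The paper handles exactly this propagation with a dedicated argument: it introduces the connected parameter sets $V_a=\bigcup_{b\in W^w_0}Q_{(a,b)}$, considers the holomorphic dependence on $q\in V_a$ of the higher-order jets of $F_1(Q_q\cap W_0)$, and shows the jet components of order $\geq 2$ vanish near $p$ and hence on all of $V_a$ by connectivity. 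Relatedly, you attribute the single-point intersection of the hyperplane family to local injectivity of $\lambda$; that only controls zero-dimensionality (and injectivity of the resulting map), whereas \emph{nonemptiness} of the intersection is equivalent to $F(Q_z\cap U)$ spanning merely a hyperplane --- i.e., precisely the $\Q$-Segre property one must carry along inductively, which is why the paper makes (iii) part of the statement rather than a consequence of nondegeneracy.
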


\begin{proof} We use the coordinate system in the preimage in the form $(z,w) \in \cx^{n-1}\times \cx$
and  denote by $U^z$ and $U^w$ the projections of a polydisc
$U\subset U_{1}\subset \cx^n$ onto the $z$-coordinate subspace and
the $w$-axis respectively. We also suppose that in these
coordinates Segre varieties of $M$ are graphs of the form
$w=h(z),\,h\in\mathcal{O}(U^z)$ and $X$ is given by $\{w=0\}$. In
the target domain we denote by $Q'_{\zeta}$ the Segre varieties of
points $\zeta \in\CP{n}$ with respect to the hyperquadric $\Q$.

\smallskip

{\bf Step 1.} We first prove part (i) and (iii) for $k=0$. We
choose $W_0\subset U(p)$ to be a polydisc, centred at $p$ and
such that for each Segre variety $Q_q,\,q\in Q_p\cap U_1$ the
intersection $Q_q\cap W_0$ is the graph of a function over
$W^z_0$, in particular, it is connected (the existence of such a
polydisc follows from the fact that $p\in Q_p,\,p\in Q_q$ for
$q\in Q_p$ and that the 1-jets of Segre varieties $Q_q,\,q\in Q_p$
at $p$ are bounded in the intersection of $Q_p$ with the closed
polydisc $\overline{U_1}\subset U_2$). So we can choose a
connected neighbourhood $W_1\supset Q_p$ such that for $s\in W_1$
the intersection $Q_s\cap W_0$ is also connected and nonempty.

We follow the strategy in \cite{DiPi} and \cite{Sh}, and consider
the set
$$
A_1=\{(Z,\zeta)\in W_1\times\CP{n}:\,F_0(Q_Z\cap W_0)\subset
Q'_{\zeta}\}.
$$
In the same way as it is done in Proposition 3.1 in \cite{Sh}, one
can show that $A_1$ is a nonempty closed complex-analytic subset
in $W_1\times\CP{n}$ of dimension $n$. But unlike the situation in
\cite{Sh}, we do not need to exclude an analytically constructible
set from $Q_p$ since the hypersurface in the target domain is the
hyperquadric $\Q$ whose Segre map is globally injective. If for
some $Z\in W_1,\,\z_1,\z_2\in\CP{n}$ and $\z_1\neq \z_2$, then
$$
(Z,\zeta_1),\,(Z,\z_2)\in A_1 \ \ \Longrightarrow \ \ F_0(Q_Z\cap
W_0)\subset Q_{\z_1}\cap Q_{\zeta_2},
$$
which is not possible since $F_0$ is biholomorphic in $W_0$, $\dim
Q_{\z_1}\cap Q_{\z_2} = n-2$ while $Q_Z\cap W_0$ is of dimension
$n-1$. So $A_1$ is, in fact, the graph of a holomorphic map
$F_1:\,W_1\longrightarrow\CP{n}$. To show that $F_1$ is locally
biholomorphic we observe that local injectivity of $\lambda$
implies that for distinct $Z_1,Z_2$ that are close to each other,
the intersection $Q_{Z_1}\cap Q_{Z_2}\cap W_0$ has dimension at
most $n-2$, so  by shrinking $W_1$, if necessary, we conclude that
$(Z_1,\zeta)\in A_1$ and $(Z_2,\zeta)\in A_1$ forces $Z_1=Z_2$, so
that $F_1$ is locally one-to-one and hence biholomorphic. Also,
the invariance property of Segre varieties~\eqref{e:inv} implies
that for $Z\in W_1$, sufficiently close to $p$, $F_0(Z)=F_1(Z)$,
which proves (i).

For the proof of (iii) for $k=0$, we consider the set $V_1$ of
points $q\in U_1$ such that $Q_q\cap W_0\neq\emptyset$. Clearly,
\begin{equation}\label{e:v1}
V_1=\bigcup\limits_{s\in W_0}Q_s.
\end{equation}
Since $W_0$ is open, $V_1$ is also open, and because each $Q_s$ is
path-connected and $W_0$ is open and path-connected, $V_1$ is also
connected. For points $s\in U_1$, close to $p$, $F_1=F_0$ and the
invariance property implies that $F_1$ transfers $Q_s\cap W_0$ to
an open subset of a projective hyperplane. Now take a point
$a\subset W^z_0\subset \cx^{n-1}$ and consider an open connected
subset $V_a\subset V_1$, which consists of $q\in U_1$ such that
$(\{a\}\times W^w_0)\cap Q_q\neq\emptyset$. Clearly, each $V_a$ is
open, $V_a=\bigcup\limits_{b\in W^w_0}Q_{(a,b)}$ so that $V_a$ is
connected and $V_1=\bigcup\limits_{a\in W^z_0}V_a$. The set $V_a$
always contains points, sufficiently close to $p$, and we may
consider on $V_a$ the holomorphic map, which assigns to $q\in V_a$
the $k$-jet, $k\geq 2$, of $Q_q$ at the point $a$. The components
of this map, corresponsing to derivatives of order $\geq 2$ vanish
for points in $V_a$, close to $p$ (since for such points
$F_1(Q_q\cap W_0)$ is contained in a projective hyperplane), and
consequently vanish on entire $V_a$, so for all $q\in V_a$, $F_1$
transfers the connected component of $Q_q\cap W_0$, containing the
point with $z$-coordinates equal to $a$, to an open subset of a
projective hyperplane. From this and~\eqref{e:v1} it follows that
the desired $\Q$-Segre property holds for $F_1$ in $W_0$.

Now we take any $r\in Q_p$ and prove the $\Q$-Segre property for
$F_1$ is some polydisc $U(r)$. We choose $U(r)\subset W_1$ and so
that $Q_p\cap U(r)$ is the graph of a function over $U^z(r)$, so
that for $p^*$, sufficiently close to $p$, $Q_p\cap U(r)$ is
connected. Since $W_1\supset Q_p$, for $p^*$, close to $p$,
$Q_{p^*}\subset W_1$, and the $\Q$-Segre property of $F_1$ in
$W_0$ implies, by the uniqueness property, that $F_1(Q_{p^*})$ is
an open subset of a projective hyperplane, and so does
$F_1(Q_{p^*}\cap U(r))$. Now arguments analogous to those used
above show the  $\Q$-Segre property for $F_1$ in $U(r)$, which
completes Step 1.

\smallskip

{\bf Step 2.} We now prove (ii) and (iii) for $j=2$. This will
give us the base case for a general induction argument. The proof
for this case will be a small modification of the one in the
previous step.

From (i), $W_0\subset W_1$ and  $F_0 |_{W_0} =F_1 |_{W_0}$. We
choose a polydisc $U(p_1)\subset W_1$ with the $\Q$-Segre property
for $F_1$ and a connected neighbourhood $W_2$ of $Q_{p_1}$ such
that for each Segre variety $Q_q,\,q\in W_2$ the intersection
$Q_q\cap U(p_1)$ is the graph of a function over $U^z(p_1)$, in
particular, it is connected. Consider the set
$$
A_2=\{(Z,\zeta)\in W_2\times\CP{n}:\,F_1(Q_Z\cap U(p_1))\subset
Q'_{\zeta}\}.
$$
The $\Q$-Segre property of $F_1$ and arguments, similar to those
in \cite{Sh} show that $A_2$ is an nonempty complex analytic set
in $W_1\times\CP{n}$ of dimension $n$. By shrinking $W_2$ if
needed, we obtain in a similar fashion that $A_2$ defines a
locally biholomorphic mapping $F_2:\,W_2\longrightarrow\CP{n}$.
Since $Q_{p_1}\ni p$, we conclude that $p\in W_2$ and for points
$p^*\in W_2$, sufficiently close to $p$, $Q_{p_2}\subset W_1$ and
the intersection $W_0\cap W_{p^*}$ is connected. By the invariance
property of $F_1=F_0$ in $W_0$ we conclude that the point in $A_2$
over $p^*$ must equal $F_1(p^*)$, i.e.,
$F_2(p^*)=F_1(p^*)=F_0(p^*)$, which proves (ii) for $j=2$. The
proof of (iii) for this case follows the same pattern as in Step
1.

\smallskip

\noindent{\bf Step 3.} We now perform the induction step by
assuming that $j>2$ and that for all smaller
 $j$ the proposition holds, i.e., for all $k<j$ the desired extensions and polydiscs with the
 $\Q$-Segre property have been already obtained.  We treat the case $k=j$.

In the same way as in Step 2, we obtain, using the $\Q$-Segre
property of $F_{j-1}$, a polydisc $U(p_{j-1})$, a neighbourhood
$W_j$ of $Q_{p_{j-1}}$ with $Q_q\cap U(p_{j-1})$ connected for
$q\in W_j$ and a locally biholomorphic map
$F_j:\,W_j\longrightarrow\CP{n}$, corresponding to the
$n$-dimensional complex analytic set
$$
A_j=\{(Z,\z)\in W_j\times\CP{n}:\,F_{j-1}(Q_Z\cap
U(p_{j-1}))\subset Q_{\zeta}\}.
$$
To prove (ii), take now $Z$ close to $p_{j-2}$ so that $Q_Z$ is
contained in $W_{j-1}$. To clarify what $F_j(Z)$ equals, recall
that by assumption the proposition is proved for smaller $j$.
Therefore,
$$
F_{j-1}(Q_Z\cap U(p_{j-1}))=F_{j-3}(Q_Z\cap
U(p_{j-1}))=Q'_{F_{j-3}(Z)},
$$
so by the definition of $F_j$ and $F_{j-2}$ we obtain that
$F_j(Z)=F_{j-2}(Z)$, and (ii) is proved.

Finally, to prove (iii) for $F_j$ we take $r\in Q_{p_{j-1}}$ and a
polydisc $U(r)$ such that for $p^*$ close to $p_{j-1}$ the
intersection $Q_{p^*}\cap  U(r)$ is connected. We also may suppose
that $Q_{p^*}\subset W_j$. Since $F_{j-2}(Q_{p^*}\cap U(p_{j-2}))$
contained in a projective hyperplane and $F_{j-2}=F_{j}$ in
$U(p_{j-2})\subset W_j$, we conclude that $F_j(Q_{p^*})$ is
contained in a projective hyperplane. To obtain the entire
$\Q$-Segre property for $F_j$ we repeat the arguments from the
proof in Step 1. This completes the proof of the theorem.
\end{proof}

We now formulate the following corollary, which is a weaker form
of our main extension result, but is convenient for applications.

\begin{corol}
Let $M,X,p,U_1,U_2,F_0$ satisfy Proposition 4.1. Then for each
point $q\in U_1\setminus X$ there exists a connected path
$\gamma:\,[0,1]\longrightarrow U_1\setminus X$,
$\gamma(0)=p,\,\gamma(1)=q$ such that $F_0$ extends analytically
along $\gamma$ as a locally biholomorphic mapping to $\CP{n}$ and
for any $r\in \gamma$ there exists a polydisc $U(r)$, centred at
$r$ such that the mapping $F_r$ has the $\Q$-Segre property in
$U(r)$ (here $F_r$ is the element of the analytically continued
germ $F_0$ along $\gamma$ at the point $r$).
\end{corol}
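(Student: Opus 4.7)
The plan is to combine Proposition~3.1 (the even Segre sets exhaust $U_1\setminus X$) with Proposition~4.1 (extension along iterated Segre varieties), and then assemble the path $\gamma$ by traversing successive Segre hypersurfaces along which the relevant extensions are defined.

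First, I would invoke Proposition~3.1 to obtain $q\in S^p_{2j}$ for some $j\geq 1$; unrolling the definition of Segre sets produces a chain $p=p_0,p_1,\ldots,p_{2j}=q$ with $p_k\in Q_{p_{k-1}}\cap U_1$. The chain automatically lies in $U_1\setminus X$: if some $p_k$ belonged to $X$ then $Q_{p_k}=X$, and the symmetry~\eqref{segre1} together with forward and backward induction would force every $p_i$ into $X$, contradicting $p_0=p\notin X$.

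Next, apply Proposition~4.1 to this chain of length $2j$ to obtain locally biholomorphic maps $F_1,\ldots,F_{2j}$ on connected neighbourhoods $W_k\supset Q_{p_{k-1}}$ satisfying (i)--(iii). The crucial observation is that only the \emph{even}-indexed subfamily $F_0,F_2,\ldots,F_{2j}$ is needed: for each $i=0,1,\ldots,j-1$, part~(ii) of Proposition~4.1 (with its index $k$ equal to $2i+2$) says that $F_{2i}$ and $F_{2i+2}$ agree on a polydisc centred at $p_{2i}$, so these extensions splice together into a coherent analytic continuation of $F_0$ terminating at $F_{2j}$, which is defined near $p_{2j}=q$.

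Now I would construct $\gamma$ as the concatenation $\gamma_0\cdot\gamma_1\cdots\gamma_{j-1}$, where each $\gamma_i$ is an arc in $Q_{p_{2i+1}}\setminus X$ joining $p_{2i}$ to $p_{2i+2}$. Both endpoints lie in $Q_{p_{2i+1}}$: $p_{2i+2}$ by construction of the chain, and $p_{2i}$ by applying the symmetry~\eqref{segre1} to $p_{2i+1}\in Q_{p_{2i}}$. Because $p_{2i+1}\notin X$ we have $Q_{p_{2i+1}}\neq X$, so $Q_{p_{2i+1}}\cap X$ is a proper analytic subvariety of positive codimension in the connected manifold $Q_{p_{2i+1}}$, and its complement is path-connected, so such an arc exists. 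Along $\gamma_i\subset Q_{p_{2i+1}}\subset W_{2i+2}$ the map $F_{2i+2}$ is defined; at the junctions $p_{2i}$ the germs of $F_{2i}$ and $F_{2i+2}$ agree by the previous step, so $F_0$ extends analytically along $\gamma$ as a locally biholomorphic map.

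Finally, for any $r\in\gamma_i$, part~(iii) of Proposition~4.1 with its index equal to $2i+1$ produces a polydisc $U(r)\subset W_{2i+2}$ on which $F_{2i+2}$, and hence the analytically continued germ $F_r$, has the $\Q$-Segre property; at the junction points the property transfers between $F_{2i}$ and $F_{2i+2}$ since they coincide on an open set. The main subtlety is the bookkeeping: recognising the even-indexed subfamily as the right one to iterate, and verifying that the consecutive overlaps $W_{2i}\cap W_{2i+2}$ all contain the polydisc around $p_{2i}$ on which adjacent extensions genuinely agree. Once this is set up, no new analytic input beyond the two preceding propositions is needed.
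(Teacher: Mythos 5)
Your proposal is correct and follows essentially the same route as the paper: Proposition~3.1 supplies the chain $p=p_0,\dots,p_{2j}=q$, Proposition~4.1 supplies the even-indexed extensions $F_0,F_2,\dots,F_{2j}$ spliced via part~(ii), the path is a concatenation of arcs inside the odd-indexed Segre varieties $Q_{p_{2i+1}}$ (connected holomorphic graphs), and part~(iii) yields the $\Q$-Segre property at every point of $\gamma$. Your added verifications (that the chain and arcs avoid $X$ --- where in fact $Q_{p_{2i+1}}\cap X=\emptyset$ by the Section~2 property $Q_p\cap X\neq\emptyset\Leftrightarrow p\in X$, making your codimension argument unnecessary though harmless) merely make explicit what the paper leaves implicit.
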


\begin{proof} Proposition 3.1 implies that there exist points
$p_1, p_2, ..., p_{2j-1}\in U_1\setminus X$ such that $\,q\in
Q_{p_0,p_1,...,p_{2j-1}},\,j\geq 1$. We set $p_{2j}:=q$ and choose
connected paths $\Gamma_{0,2}\subset Q_{p_1},\,\Gamma_{2,4}\subset
Q_{p_3},...,\Gamma_{2j-2,2j}\subset Q_{p_{2j-1}}$ such that
\begin{multline}
\Gamma_{0,2}(0)=p_{0},\,\Gamma_{0,2}(1)=p_2,\,\Gamma_{2,4}(0)=p_2,\,\Gamma_{2,4}(1)=p_4,...,\\
\Gamma_{2j-2,2j}(0)=p_{2j-2},\,\Gamma_{2j-2,2j}(1)=p_{2j}=q.
\end{multline}
Then, applying Proposition 4.1, we conclude that $F_2$ is a local
biholomorphic extension of $F_0$ along $\Gamma_{0,2}$; $F_4$ along
$\Gamma_{2,4}$; ... ; $F_{2j}$ is a local biholomorphic extension
of $F_{2j-2}$ along $\Gamma_{2j-2,2j}$ (of course, we use the
connectivity and simple connectivity of the Segre varieties as
holomorphic graphs). Taking now $\gamma$ to be the union of the
paths of $\Gamma_{0,2},...,\Gamma_{2j-2,2j}$, we obtain the
desired local biholomorphic extension of $F_{0}$. The $\Q$-Segre
property for $F_r,\,r\in\gamma$ now follows from Proposition~4.1.
\end{proof}

\section{Extension along an arbitrary path}

In this section we prove that $F_0$ can be analytically continued
along {\it any} path in $U_1\setminus X$. We begin with the
following proposition.

\begin{propos}
Let $M,X,U_1,U_2$ satisfy Proposition 2.1, $r\in U_1\setminus X$,
$U(r)\subset U_1\setminus X$ a polydisc centred at $p$, and let
$F,G$ be two biholomorphic mappings $U(r)\longrightarrow\CP{n}$
with $\mathcal{Q}$-Segre property in $U(r)$. Then there exists a
linear automorphism $\tau$ of $\CP{n}$ such that $G=\tau\circ F$.
\end{propos}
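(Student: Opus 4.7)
Consider $\psi := G \circ F^{-1} : F(U(r)) \to G(U(r))$, a biholomorphism of open subsets of $\CP{n}$. The plan is to show $\psi$ extends to a projective linear automorphism $\tau \in \mathrm{Aut}(\CP{n})$, which immediately gives $G = \tau \circ F$. The $\mathcal{Q}$-Segre property provides, for each $q \in U(r)$, unique projective hyperplanes $L_F(q), L_G(q) \subset \CP{n}$ containing $F((Q_q)^c)$ and $G((Q_q)^c)$ respectively; since $F$ and $G$ are biholomorphic and $(Q_q)^c$ is a connected $(n-1)$-dimensional complex submanifold, $F((Q_q)^c)$ is a nonempty open subset of $L_F(q) \cap F(U(r))$ (and analogously for $G$). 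Hence $\psi$ carries open pieces of $L_F(q)$ onto open pieces of $L_G(q)$. By Proposition~2.1, the Segre map $\lambda$ is locally injective on $U(r) \subset U_1 \setminus X$, which forces the hyperplane-valued maps $\phi_F, \phi_G : U(r) \to (\CP{n})^*$ defined by $q \mapsto L_F(q), L_G(q)$ to be locally biholomorphic.

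Fix $q_0 \in U(r)$ and set $z_0 := F(q_0)$. By the symmetry of Segre varieties, $s \in Q_{q_0}$ implies $q_0 \in Q_s$ and hence $z_0 \in L_F(s)$, so $\phi_F|_{Q_{q_0} \cap U(r)}$ is a locally biholomorphic map between the $(n-1)$-dimensional manifold $Q_{q_0} \cap U(r)$ and the $(n-1)$-dimensional space of hyperplanes through $z_0$; in particular, its image is locally open, and the same holds for $\phi_G$. For two nearby distinct points $q_0, q_1 \in U(r)$ the intersection $Q_{q_0} \cap Q_{q_1} \cap U(r)$ is generically $(n-2)$-dimensional, and its $\phi_F$-image is a locally open $(n-2)$-dimensional subfamily of the $(n-2)$-dimensional linear system of hyperplanes through $\{z_0, z_1\}$, where $z_1 := F(q_1)$; the base locus of this full linear system is the projective line $\overline{z_0 z_1}$, hence the base locus of the subfamily is also $\overline{z_0 z_1}$. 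Transferring this via $\psi$ and running the symmetric argument on the $G$-side, $\psi$ must map a piece of $\overline{z_0 z_1}$ into $\overline{\psi(z_0) \psi(z_1)}$. (For $n = 2$ lines are hyperplanes, so this step is vacuous.) The local form of the fundamental theorem of projective geometry --- any biholomorphism of connected open subsets of $\CP{n}$, $n \geq 2$, that locally maps line pieces to line pieces is the restriction of an element of $\mathrm{Aut}(\CP{n})$ --- then produces the desired $\tau$, and $G = \tau \circ F$.

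The main obstacle is the passage from preservation of Segre hyperplanes to preservation of arbitrary projective lines: concretely, verifying that through every point (respectively every pair of nearby points) the family of Segre hyperplanes fills an open subset of the corresponding linear system. These are maximal-rank assertions for $\phi_F$ restricted to $Q_{q_0}$ (respectively $Q_{q_0} \cap Q_{q_1}$), and they reduce via the local biholomorphism of $\phi_F$ to a transversality check showing that two nearby distinct Segre varieties intersect in the expected codimension two --- which in turn follows from the local injectivity of $\lambda$ and the graph form of Segre varieties used in Section~2.
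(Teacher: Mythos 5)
Your skeleton agrees with the paper's in outline (form $\psi=G\circ F^{-1}$, use the $\Q$-Segre property to show it takes hyperplane pieces to hyperplane pieces, conclude projectivity), but two steps do not hold as written. First, the transversality claim on which your pair argument rests is false: local injectivity of $\lambda$ gives only $Q_{q_0}\neq Q_{q_1}$ for nearby distinct points, \emph{not} that $Q_{q_0}\cap Q_{q_1}\cap U(r)$ is nonempty of codimension two. For the standard quadric $\{\im w=|z|^2\}\subset\CC{2}$ one has $Q_{(a,b)}=\{w=\bar b+2i\bar a z\}$, so the Segre varieties of two points differing only in the $w$-coordinate are disjoint parallel graphs; an open set of nearby pairs $(q_0,q_1)$ therefore carries no Segre hyperplane through both $z_0$ and $z_1$, and the base-locus argument yields the line $\overline{z_0z_1}$ only for ``generic'' pairs, with no limiting argument supplied to recover the rest. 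Relatedly, your $\phi_F:U(r)\to(\CP{n})^*$ is ill-defined: the $\Q$-Segre property attaches a hyperplane to parameters $s$ whose Segre variety \emph{meets} $U(r)$, i.e.\ $s\in\bigcup_{q\in U(r)}Q_q$, and for $q\in U(r)$ off $M$ the variety $Q_q$ may miss $U(r)$ entirely (again visible on the quadric), so neither $\phi_F|_{Q_{q_0}\cap U(r)}$ nor its rank makes sense as stated. The paper avoids all of this by parametrizing the hyperplane family over a full $n$-dimensional neighbourhood $V(q)$ of a point $q\in Q_r$ inside $U_1$ (not inside $U(r)$): the assignment $\tilde q\mapsto\lambda'\bigl(F(Q_{\tilde q}\cap\tilde U(r))\bigr)$ is injective holomorphic between $n$-dimensional manifolds, hence has open image $W'$ in $(\CP{n})^*$.

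Second, and more fundamentally, even where your construction works it only shows $\psi$ preserves an \emph{open subfamily} of hyperplanes (for $n=2$, an open sub-pencil of lines through each point), whereas the fundamental theorem of projective geometry needs all line pieces. The step you are missing --- the one carrying the real weight in the paper's proof --- is analytic continuation in the dual space: with $A'\subset(\CP{n})^*$ the connected open set of hyperplanes meeting a ball $B\subset F(U(r))$, the order-$\geq 2$ jet components of $\tau(l_{\mathbf a}\cap B)$ are holomorphic in $l_{\mathbf a}\in A'$ and vanish on the open subset $W'$, hence vanish identically on $A'$ by the identity theorem; only then is \emph{every} hyperplane meeting $B$ preserved, after which the references (Tresse, Sukhov, Merker: biholomorphic point symmetries of the flat system $y_{x_kx_l}=0$ are projective) finish the proof. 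Note in particular that your parenthetical for $n=2$ is backwards: that case is not vacuous --- it is precisely the case where passing from an open pencil of preserved lines to the full pencil \emph{is} the whole content. Inserting the identity-theorem-in-the-dual-space argument would both repair the $n=2$ case and render the pair/base-locus machinery, with its fragile transversality hypotheses, unnecessary.
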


\begin{proof}
Let $\lambda:\,U_1\longrightarrow\CC{N}$ and
$\lambda':\,\CP{n}\longrightarrow(\CP{n})^*$ be the Segre maps in
the preimage and the target domain respectively, $U'=F(U)$. We
consider the map $\tau:=G\circ F^{-1}$, which is a biholomorphic
map $U'\longrightarrow\CP{n}$ onto its image.

From the $\mathcal{Q}$-Segre properties of $F$ and $G$ we know
that $\tau$ maps ``many" (connected components of) intersections
of projective hyperplanes with $U'$ to open subsets of projective
hyperplanes, and want now to prove the same for the set of
hyperplanes, intersecting a ball $B$ in some coordinate chart in
$\CP{n}$. To do so, put $r':=F(r)$ and fix some coordinate ball
$B\subset U'$, centred at $r'$. Let $q\in Q_r$ so that $r\in
S_q$. Choose a polydisc $\tilde U(r)$ with the following
properties:

\smallskip

\noindent (i) $\tilde U(r)\subset U(r),\,F(\tilde U(r))\subset B$.

\smallskip

\noindent (ii) $S_q\cap \tilde U(r)$ is a graph over $\tilde
U^z(r)$ (we use the notation from Proposition 4.1).

\smallskip

According to property (ii), there exists a connected neighbourhood
$V(q)$ such that for each $\tilde r\in \tilde U(r)$ there exists
$\tilde q\in V(q)$ such that $\tilde r\in Q_{\tilde q}$ and
$Q_{\tilde q}\cap \tilde U(r)$ is connected (we simply use the
fact that $Q_{\tilde r}$ is close to $Q_r\ni q$). Choosing $\tilde
U(r)$ small enough, we may suppose the Segre map $\lambda$ is
injective in $V(q)$. Consider now the following mapping: taking
$\tilde q\in V(q)$, we associate $Q_{\tilde q}$ to it, then
consider $F(Q_{\tilde q}\cap \tilde U(r))$ - an open subset of a
projective hyperplane, and, using $\lambda'$, associate a point in
$(\CP{n})^*$ to it. This is an injective holomorphic map from
$V(q)$ to $(\CP{n})^*$, denote its image by $W'$. Consider also
the set of projective hyperplanes, intersecting $B$, and denote
this open connected set in $(\CP{n})^*$ by $A'$. Then $W'$ is an
open subset of $A'$ (by property (i)), and, by definition of
$\tau$, the map $\tau$ sends $l_{\mathbf a}\cap B$ with
$l_{\mathbf a}\in W'$ (here $l_{\mathbf a}$ is a projective
hyperplane that corresponds to $\mathbf a\in\CP{n}$) to open
subsets of projective hyperplanes. Considering, as in the proof of
Proposition~4.1, the high order jets of $\tau(l_{\mathbf a}\cap
B)$ as holomorphic mappings of $A'$, we see that their components,
corresponding to derivatives of order $\geq 2$, vanish for
$l_{\mathbf a}\in W$, so they must vanish for all $l_{\mathbf
a}\in A'$ and we obtain the desired "hyperplane-to-hyperplane"
property of $\tau$ for any hyperplane, intersecting $B$. As it can
be verified from many references (see, for example, \cite{tresse},
\cite{sukhov}, or \cite{merker1},\cite{merker2}) $\tau$ in this
case must be a local biholomorphic symmetry of the system of flat
second order complex differential equations
$$
y_{x_k x_l}=0,\ \ k, l=1, 2,..., n-1.
$$
Hence, it is a linear automorphism of $\CP{n}$, and $G=\tau\circ
F$, as required.
\end{proof}

We now can prove part (i) of Theorem~\ref{t.1}, which we formulate
in the following theorem.

\begin{thm}
Let $M\subset\CC{n}$ be a smooth real-analytic hypersurface,
$X\subset\CC{n}$ a complex hypersurface, $0\in X\subset M$.
Suppose that $M$ is Levi nondegenerate in $M\setminus X$ and
pseudospherical. Then there exists a neighbourhood $U_1$ of the
origin such that for each point $p\in (M\setminus X)\cap U_1$ any
local biholomorphic mapping
$F_0:\,(\CC{n},p)\longrightarrow\CP{n}$, transferring $(M,p)$ onto
an open piece of a nondegenerate real hyperquadric
$\mathcal{Q}\subset\CP{n}$, extends analytically along arbitrary
continuous path $\gamma:\,[0,1]\longrightarrow U_1\setminus X$,
$\gamma(0)=p$ as a local biholomorphic mapping into $\CP{n}$.
\end{thm}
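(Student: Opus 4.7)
The plan is to combine Corollary~4.2 with the rigidity statement Proposition~5.1 to construct an analytic continuation of $F_0$ along an arbitrary path by a covering-and-gluing argument.

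Fix a continuous path $\gamma:[0,1]\to U_1\setminus X$ with $\gamma(0)=p$. For every $t\in[0,1]$, Corollary~4.2 applied to the endpoint $q=\gamma(t)$ produces a path $\alpha_t\subset U_1\setminus X$ from $p$ to $\gamma(t)$ along which $F_0$ extends, together with a locally biholomorphic map $G_t:U(\gamma(t))\to\CP{n}$ defined on a polydisc centred at $\gamma(t)$, where $G_t$ enjoys the $\Q$-Segre property in $U(\gamma(t))$. At $t=0$ we take $\alpha_0$ to be the constant path, so that $G_0=F_0$ on some polydisc $U(p)$ contained in the original domain of $F_0$.

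By compactness of $\gamma([0,1])$, choose a partition $0=t_0<t_1<\cdots<t_k=1$ such that each arc $\gamma([t_i,t_{i+1}])$ is contained in $U(\gamma(t_i))\cap U(\gamma(t_{i+1}))$, and let $D_i$ be a connected open polydisc inside this intersection containing $\gamma([t_i,t_{i+1}])$. Both $G_{t_i}|_{D_i}$ and $G_{t_{i+1}}|_{D_i}$ are locally biholomorphic and have the $\Q$-Segre property on $D_i$, so by Proposition~5.1 there exists a linear automorphism $\tau_i\in{\rm Aut}(\CP{n})$ with $G_{t_{i+1}}=\tau_i\circ G_{t_i}$ on $D_i$. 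Set $\sigma_0:=\mathrm{id}$ and $\sigma_{i+1}:=\sigma_i\circ\tau_i^{-1}$, and define $F^{(i)}:=\sigma_i\circ G_{t_i}$ on $U(\gamma(t_i))$. Each $F^{(i)}$ is locally biholomorphic into $\CP{n}$ (linear automorphisms being biholomorphic), satisfies $F^{(0)}=G_0=F_0$ on $U(p)$, and by construction $F^{(i)}=F^{(i+1)}$ on $D_i$. The collection $\{F^{(i)}\}$ is therefore an analytic continuation of $F_0$ along $\gamma$.

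The substantive ingredients are already in place: extendability along distinguished paths assembled from Segre chains (Corollary~4.2) and the $\Q$-Segre rigidity that identifies two such local extensions up to a projective-linear transformation (Proposition~5.1). The delicate point is that the extensions $G_t$ provided by Corollary~4.2 follow paths $\alpha_t$ that in general differ from $\gamma$; however, the discrepancy between two such local extensions at a common point is always a global element of ${\rm Aut}(\CP{n})$, and the cocycle $\{\sigma_i\}$ precisely compensates for it, producing a single-valued extension along $\gamma$ itself. This same mechanism is what later, in Theorem~\ref{t.2}, yields the explicit monodromy representation $\pi_1(U\setminus X)\to{\rm Aut}(\CP{n})$ governing the possibly multiple-valued character of the full continuation in Theorem~\ref{t.1}.
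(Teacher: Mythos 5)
Your argument is correct and is essentially the paper's own proof: both rest on Corollary~4.2 together with Proposition~5.1, using the fact that any two local extensions with the $\Q$-Segre property differ by a global automorphism of $\CP{n}$ to recalibrate an extension produced along an auxiliary Segre-chain path into a continuation along $\gamma$ itself, the only difference being that the paper packages this as a first-failure (supremum over $t$) argument rather than your finite subdivision with the telescoping corrections $\sigma_i$. One cosmetic repair: a polydisc $D_i$ containing the whole arc $\gamma([t_i,t_{i+1}])$ need not fit inside $U(\gamma(t_i))\cap U(\gamma(t_{i+1}))$, so instead apply Proposition~5.1 on a small polydisc about a point of the arc and propagate the identity $G_{t_{i+1}}=\tau_i\circ G_{t_i}$ across the convex (hence connected) intersection of the two polydiscs by the uniqueness theorem.
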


\begin{proof} Let $U_1,U_2$ be a standard pair of neighbourhoods of the origin
such that $M$ is Segre-regular in $U_1$. Suppose on the contrary
that the claim of the theorem is false. Then, since for $t$ close
to $0$ the extension with $\mathcal Q$-Segre property already
exists, we can choose the smallest $t_0,\,0<t_0<1$, such that
$F_0$ does not extend analytically to $\gamma(t_0)$ along the path
$\gamma|_{[0,t_0]}$ with the $\mathcal{Q}$-Segre property in some
neighbourhood of each $\gamma(t),0\leq t\leq t_0$ ($t_0$ is simply
the supremum of $t$ such that $F_0$ extends to $\gamma(t)$ along
$\gamma|_{[0,t]}$ with the $\mathcal{Q}$-Segre property at each
point). Applying Corollary 4.2, we obtain a polydisc $U(r)$,
centred at $r=\gamma(t_0)$ and a mapping $\tilde F_r$ in $U(r)$
with the $\mathcal{Q}$-Segre property. We now take some $t^*$
close to $t_0$ with $t^*<t_0$ and $r^*=\gamma(t^*)\in U(r)$ and
denote the corresponding extension of $F_0$ with the
$\mathcal{Q}$-Segre property at some polydisc $U(r^*)$ by
$F_{r^*}$. Without loss of generality we may assume that
$U(r^*)\subset U(r)$. Then, applying Proposition 5.1 for the
polydisc $U(r^*)$ and mappings $\tilde F_r,F_{r^*}$ in it, we get
a linear automorphism $\tau$ of $\CP{n}$ such that
$F_{r^*}=\tau\circ \tilde F_r$ in $U(r^*)$. This equality clearly
implies, by the global nature of  $\tau$, that $\tau\circ \tilde
F_r$ is a holomorphic extension of $F_{r^*}$ to $U(r)\ni r$ with
the $\mathcal{Q}$-Segre property, which contradicts the definition
of $t_0$.
\end{proof}

Let $Y,Y'$ be complex manifolds, and $F_0:\,(Y,p)\longrightarrow
(Y',p')$ be a local biholomorphic mapping between them. Suppose
that the germ $(F_0,p)$ can be extended analytically along any
continuous path $\gamma\subset Y$, starting at $p$. By a \it
(multiple-valued) analytic mapping in the sense of Weierstrass \rm
we mean the collection  $\{(F_{q,\gamma},q)\}$ of all possible
germs, obtained by analytic extension of $(F_0,p)$ along all
possible continuous paths $\gamma\subset Y$, starting at $p$ and
ending at arbitrary points $q\in Y$ (see, for example,
\cite{shabat} for more details of this concept). If for an
arbitrary path $\gamma\subset Y$ the analytic extension of
$(F_0,p)$ along $\gamma$ gives the same element $(F_0,p)$, then
the (multiple-valued) analytic mapping simply determines a
holomorphic mapping $Y\longrightarrow Y'$ (\it the Monodromy
theorem). \rm Note that if $Y$ and $Y'$ are domains in $\CC{}$,
then this notion simply gives an accurate set-up for a
(multiple-valued) analytic function. Putting now $Y=U\setminus
X,\,Y'=\CP{n}$, we can formulate the following

\begin{corol}
Let $M,X,U_1,F_0,p$ satisfy Theorem 5.2. Then the mapping
$F_0:\,(\CC{n},p)\longrightarrow\CP{n}$ extends locally
biholomorphically to a (multiple-valued) analytic mapping
$F:\,U_1\setminus X\longrightarrow\CP{n}$ in the sense of
Weierstrass. Moreover, each analytic element $(F_r,r)$ of $F$ at a
point $r\in U_1\setminus X$ has the $\mathcal Q$-Segre property.
\end{corol}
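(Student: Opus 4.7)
The plan is to treat this corollary as essentially a repackaging of Theorem 5.2 into the Weierstrass framework for multiple-valued analytic mappings, together with a short verification that the $\mathcal{Q}$-Segre property survives the continuation procedure. Since Theorem 5.2 already carries out the analytic work, no serious new construction is required.

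First, I would apply Theorem 5.2 to every continuous path $\gamma:[0,1]\to U_1\setminus X$ with $\gamma(0)=p$. For each such $\gamma$ and its endpoint $q=\gamma(1)\in U_1\setminus X$, the theorem produces a locally biholomorphic germ $(F_{q,\gamma},q)$ that is the analytic continuation of $(F_0,p)$ along $\gamma$. The collection
\[
F:=\bigl\{(F_{q,\gamma},q):\, q\in U_1\setminus X,\ \gamma \text{ a path in } U_1\setminus X \text{ from } p \text{ to } q\bigr\}
\]
is, by the definition of multiple-valued analytic mapping in the sense of Weierstrass recalled just before the corollary, the desired object $F:U_1\setminus X\to\CP{n}$. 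Local biholomorphicity at every germ is inherited from Theorem 5.2.

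Next, I would verify that each analytic element $(F_r,r)$ of $F$ enjoys the $\mathcal{Q}$-Segre property. For this I would revisit the proof of Theorem 5.2 and point out that the critical parameter $t_0$ was defined precisely as the supremum of $t$ for which the continuation along $\gamma|_{[0,t]}$ exists \emph{together with} the $\mathcal{Q}$-Segre property at every intermediate point. The contradiction argument excluded $t_0<1$ by invoking Corollary 4.2 to produce a $\mathcal{Q}$-Segre extension $\tilde F_r$ on $U(r)$, and Proposition 5.1 to match it with the prior extension $F_{r^*}$ via a linear automorphism $\tau\in\mathrm{Aut}(\CP{n})$. Since a linear automorphism of $\CP{n}$ sends projective hyperplanes to projective hyperplanes, the map $\tau\circ\tilde F_r$ still has the $\mathcal{Q}$-Segre property on $U(r)$; hence the property is propagated through the continuation and is present at every germ of $F$.

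I do not anticipate a genuine obstacle here. The only delicate bookkeeping point is the observation that left composition with a projective-linear map preserves the $\mathcal{Q}$-Segre property, which permits unrestricted re-gluing across the path and passes the property uniformly to every element $(F_r,r)$. All remaining content is a direct transcription of Theorem 5.2 into Weierstrass terminology.
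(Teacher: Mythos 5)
Your proposal is correct and follows essentially the same route as the paper, which states this corollary without a separate proof as an immediate consequence of Theorem 5.2, whose proof (as you note) builds the $\mathcal{Q}$-Segre property into the very definition of the supremum $t_0$, so that the Weierstrass collection of germs over all paths automatically carries the property. Your explicit observation that composition with a projective-linear automorphism $\tau$ preserves the $\mathcal{Q}$-Segre property is exactly what the paper compresses into the phrase ``by the global nature of $\tau$'' in the proof of Theorem 5.2.
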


The following examples, as well as Example~\ref{ex.nonsh} and the model
example $M^{log}$, illustrate behaviour of the map $F$. A special case
of Example~5.4 is considered in \cite{elz} and \cite{lamel}, and Example~5.5 is
borrowed from \cite{elz}.

\begin{ex}
Consider the standard hyperquadric
\begin{equation}\label{e.Q}
Q=\{(z^*,w^*)\in\CC{2}:\,\im w^*=|z^*|^2\},
\end{equation}
and the (multiple-valued) locally biholomorphic mappings
$F_{\alpha}:\,\CC{}\times(\CC{}\setminus\{0\})\longrightarrow\CP{2}$
given as
$$
z^*=zw^\alpha,\,w^*=w^{2\alpha},\,\alpha\in \RR{}\setminus \{0\}.
$$
Then it is not difficult to check, by plugging $F$ into the
defining equation of $Q$, that $F_\alpha^{-1}$ determined by
$-\frac{\pi}{2}<{\rm Arg \ w}<\frac{\pi}{2}$ maps $(Q,p^*)$,
$p*=(0,1)\in Q$ onto an open piece of the smooth real-analytic
hypersurface
\begin{equation}\label{e.Ma}
M_\alpha =\left\{(z,w)\in \cx^2: w=\bar
w(\sqrt{1-|z|^4}+i|z|^2)^{\frac{1}{\alpha}},\ \ |z|<1 \right\}.
\end{equation}
All $M_\alpha$ are nonminimal, as they contain $X=\{w=0\}$, and
Segre-regular in $|z|<1$. $F_\alpha$ turns out to be exactly the
(multiple-valued) locally biholomorphic mapping, provided by
Corollary~5.3. For $\alpha\in\mathbb{Z}$ the mapping $F_\alpha$ is
single-valued and extends holomorphically to $X=\{w=0\}$. Thus
$F_\alpha^{-1}$ performs a certain blow-up of the 3-sphere in
$\CC{2}$. For $\alpha$ rational the multiple-valued mapping
$F_\alpha$ is finitely-valued and extends to $X$ as a holomorphic
correspondence\,\cite{shaf} (the graph of $F_\alpha$ extends even
to an algebraic subset of $\CP{4}$ in this case). For irrational
$\alpha$ the mapping $F_\alpha$ is infinitely-valued and,
furthermore, the graph of a germ of $F_\alpha$ does not even
extend to a closed complex-analytic subset of $(U_1\setminus
X)\times\cx^{2}$ (note that such extension is possible for the
model example $M^{\rm log}$).
\end{ex}

\begin{ex} Considering the quadric $Q$ defined by \eqref{e.Q} and the blow-ups $G_m$
given by
$$
z^*=zw^{m},\ \ w^*=w,\ \ m\in\mathbb{Z},\ \ m>0.
$$
The image of $Q$ under $G_m$ is the union of algebraic hypersurfaces given
by
\begin{equation}\label{e.Km}
K_m = \{\im w=|z|^2|w|^{2m} \}.
\end{equation}
These are nonminimal with $X=\{w=0\}$ and Segre-regular in
appropriate polydiscs $U_m(0)$. Here $G_m$ are single-valued and
extend to $X$ holomorphically.
\end{ex}

\section{Application: transfer of sphericity}

The above continuation results imply the following remarkable fact
on the geometry of nonminimal real hypersurfaces. Throughout the
section we denote by $M^+$ and $M^-$ the connected components of
$M\setminus X$. The next theorem is reformulation of
Theorem~\ref{t.1}, part (ii).

\begin{thm}
Let $M\subset\CC{n}$ be a smooth real-analytic hypersurface,
containing a complex hypersurface $X\subset\CC{n}$ and Levi
nondegenerate in $M\setminus X$. Suppose that $M$ is
pseudospherical with $M^+$ being $(k,l)$-spherical. Then $M^-$ is
$(k',l')$ spherical with, possibly, $(k',l')\neq (k,l)$.
\end{thm}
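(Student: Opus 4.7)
The plan is to use the multiple-valued extension provided by Corollary~5.3 to transfer the sphericity from $M^+$ to $M^-$ through the connected punctured neighbourhood $U_1\setminus X$, and then to invoke the $\mathcal Q$-Segre property to identify the image of $M^-$ as an open piece of some hyperquadric.

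First, I would pick $p\in M^+\cap U_1$ and a germ of a local biholomorphic map $F_0\colon (\CC{n},p)\to \CP{n}$ mapping $(M,p)$ onto an open piece of the $(k,l)$-hyperquadric $\mathcal Q$. Theorem~5.2 together with Corollary~5.3 supplies a multiple-valued locally biholomorphic analytic extension $F\colon U_1\setminus X\to \CP{n}$, every germ of which has the $\mathcal Q$-Segre property. Because $X$ is a complex hypersurface (hence has real codimension two in $\CC{n}$), the open set $U_1\setminus X$ is connected, and I can choose a continuous path $\gamma\colon [0,1]\to U_1\setminus X$ joining $p$ to any prescribed point $q\in M^-\cap U_1$. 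Let $\widetilde F$ denote the analytic continuation of $F_0$ along $\gamma$, defined in a polydisc $U(q)$ and possessing the $\mathcal Q$-Segre property there.

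Next I would show that $\widetilde F$ carries the germ of $M$ at $q$ onto a germ of some real hyperquadric. Set $M':=\widetilde F(M\cap U(q))$, a real-analytic hypersurface in $\CP{n}$. By the invariance of Segre varieties \eqref{e:inv}, the Segre variety of $M'$ at $\widetilde F(\zeta)$ coincides with $\widetilde F(Q_\zeta\cap U(q))$ for $\zeta\in M\cap U(q)$, and the $\mathcal Q$-Segre property forces this to be an open piece of a projective hyperplane. Hence every Segre variety of $M'$ near $\widetilde F(q)$ is an open piece of a projective hyperplane of $\CP{n}$. Writing $M'$ in an affine chart as $\{\rho(z,\bar z)=0\}$ and complexifying, the condition that $\{z:\rho(z,\bar\zeta)=0\}$ is a hyperplane for each $\bar\zeta$ allows me to normalize $\rho$ to be affine linear in $z$; the symmetry $z\in Q_\zeta\Leftrightarrow\zeta\in Q_z$ then forces $\rho$ to be affine linear in $\bar\zeta$ as well. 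Combined with the reality condition $\overline{\rho(z,\bar z)}=\rho(z,\bar z)$, this forces $\rho$ to correspond to a Hermitian form in the homogeneous coordinates, so $M'$ lies in a genuine real hyperquadric $\widetilde{\mathcal Q}\subset\CP{n}$.

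Finally, since $M\setminus X$ is Levi nondegenerate, the Levi form of $M$ at $q\in M^-$ has some signature $(k',l')$, and a local biholomorphism preserves the Levi signature; therefore $\widetilde{\mathcal Q}$ is necessarily a $(k',l')$-hyperquadric. Thus $\widetilde F$ is a germ of a biholomorphic map sending $(M^-,q)$ onto an open piece of a $(k',l')$-hyperquadric, which by the local-to-global principle recalled in the introduction (sphericity at one Levi-nondegenerate point implies sphericity everywhere) shows that $M^-$ is $(k',l')$-spherical.

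The main obstacle is the characterization step in the middle paragraph: a Levi nondegenerate real-analytic hypersurface in $\CP{n}$ whose Segre varieties are projective hyperplanes must be contained in a hyperquadric. The remaining ingredients, extension along a path crossing $X$, invariance of Segre varieties, and matching of Levi signatures, all follow directly from the machinery of Sections~2--5. Crucially, nothing in the argument compels $(k',l')=(k,l)$, which is consistent with the statement of the theorem and with Example~\ref{ex.nonsh}.
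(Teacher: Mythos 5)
Your proposal is correct and follows essentially the same route as the paper: continue $F_0$ across $X$ to a point of $M^-$ (the paper uses the Segre-chain continuation of Corollary~4.2 where you invoke Theorem~5.2/Corollary~5.3, an immaterial difference), then use the $\mathcal Q$-Segre property and the invariance of Segre varieties to see that the image of $M^-$ has hyperplane Segre varieties, and conclude it is a nondegenerate hyperquadric of some, possibly different, signature $(k',l')$. Your middle characterization step (Segre varieties being hyperplanes forces a hyperquadric) is exactly the paper's final step, carried out there via the normalization $2\re w'=H(z',\bar z')+O(2)$ and the vanishing of higher-order terms in the Segre variety expansion.
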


\begin{proof}
We fix a standard pair of neighbourhoods $U_1,U_2$ such that $M$
is Segre-regular in $U_1$ and choose points $p^+\in M^+\cap U_1$
and $p^-\in M^-\cap U_1$ and a local biholomorphic map
$F_0:\,(\CC{n},p^+)\longrightarrow\CP{n}$ with
$F_+(M^+)\subset\mathcal{Q}$ for a nondegenerate hyperquadric
$\mathcal{Q}\subset\CP{n}$ of the signature $(k,l)$. Applying
Corollary 4.2, we can find a polydisc $U(p^-)$ and a local
biholomorphic map $F_-:\,U(p^-)\longrightarrow\CP{n}$ with
$\mathcal{Q}$-Segre property. Put $\mathcal{P}:=F_-(M^-\cap
U(p^-))$. Then $\mathcal{P}\subset\CP{n}$ is a smooth
real-analytic Levi nondegenerate hypersurface, biholomorphically
equivalent to $M^-\cap U(p^-)$. The $\mathcal{Q}$-Segre property
of $F_-$ and the holomorphic invariance of Segre varieties imply
that all Segre varieties of $\mathcal{P}$, in some neighbourhood
of $F_-(p^-)$, are open pieces of projective hyperplanes. Now
choose some affine chart, containing $\mathcal{P}$ and make an
invertible affine transformation such that in new coordinates
$\mathcal{P}$ has the form
$$
2\re w'=H(z',\bar z')+ O(2),\ \ z'\in\CC{n-1},w\in\CC{},
$$
where $H(z',\bar z')$ is a nondegenerate Hermitian form. Then
Segre varieties of $\mathcal{P}$ have the form
$$
w=-\bar b'+H(z,\bar a')+... .
$$
This equation determines a hyperplane for all sufficiently small
$a$ and $b$, which implies that all monomials in dots in fact
vanish, and therefore, $\mathcal{P}$ is  a nondegenerate real
hyperquadric.
\end{proof}

The following example shows that, surprisingly, the equality
$(k,l)=(k',l')$  does not hold in Theorem 6.1 even for algebraic
nonminimal hypersurfaces in $\CC{n},\,n>2$ (in particular, $M^+$
and $M^-$ may have \it different \rm signature of the Levi form).

\begin{ex}\label{ex.nonsh}
Let $Q=\{\im w^*=|z^*_1|^2+|z^*_2|^2\}\subset\CC{3}$ be a real
strictly pseudoconvex hyperquadric. Consider the following
"blow-up"\, map $F$:
$$z_1^*=z_1\sqrt{w},\,z^*_2=z_2w,\,w^*=w.$$ Choosing a connected neighbourhood
$D\subset Q$ of the point $(0,0,1)\in Q$ and the single-valued
biholomorphic branch of $F$, given by
$-\frac{\pi}{2}<\mbox{Arg}\,w^*<\frac{\pi}{2}$, it is
straightforward to check that $F^{-1}$ maps $D$ onto an open piece
of the smooth real-analytic nonminimal hypersurface
$$M=\left\{w=\bar w\frac{(i|z_1|^2+\sqrt{1-2i|z_2|^2\bar
w-|z_1|^4})^2}{(1-2i|z_2|^2\bar w)^2} \right\},$$ satisfying $\re
w>0$ and $z_1,z_2,w$ be small enough (one should rewrite the
equation of $Q$ in the new coordinates). It is easy to check that
$M$ is Levi nondegenerate outside $w=0$, so $M$ satisfies the
conditions of Theorem 6.1, and at the point $p^{+}\in
M^+,\,p^+=(0,0,\varepsilon),\varepsilon>0$ the Levi form is
positive definite, though at the point $p^-\in
M^-,\,p^-=(0,0,-\varepsilon),\varepsilon>0$ the Levi form has
eigenvalues of different signs. So $M^+$ is $(2,0)$-spherical,
though $M^-$ is $(1,1)$-spherical. In fact, the single-valued
biholomorphic branch of $F$ given by $\frac{\pi}{2}<\mbox{Arg
w}<\frac{3\pi}{2}$ maps the negative half $M^-$ of $M$ onto a
domain on the indefinite hyperquadric $Q^-=\{\im
w^*=-|z_1^*|^2+|z^*_2|^2\}$.
\end{ex}

Unlike the case $n\geq 3$, for $n=2$ all hyperquadrics in $\CP{2}$
are equivalent to the 3-sphere $S^3\subset\CC{2}$ and the
phenomenon from Example~6.2 can not hold. However, it may still
happen that the multiple-valued mapping, obtained in Theorem 5.2,
maps $M^+$ and $M^-$ to \it different \rm hyperquadrics in
$\CP{2}$, though these hyperquadrics are equivalent by means of
some $\tau\in\mbox{Aut}(\CP{2})$.

\begin{ex}
Consider the hypersurface $M^{\rm log}\subset\CC{2}$ (see
Introduction). Then the multiple-valued map
$F:\,(z,w)\longrightarrow(z,\ln w)$ maps the domain $M^+\subset
M$, given by the condition $u>0$, to the hyperquadrics
$$
\{\im w^*+2k\pi=|z^*|^2\}, \ \ k\in\mathbb{Z},
$$
and the domain $M^-\subset M$, given by $u<0$, to the
hyperquadrics
$$
\{\im w^*+(2k+1)\pi=|z^*|^2\}, \ \ k\in\mathbb{Z}.
$$
Each of the hyperquadrics looks as $\tau^k(\mathcal Q_0)$, where
$\mathcal Q_0$ is the standard hyperquadric $\{\im w^*=|z^*|^2\}$
and the element $\tau\in\mbox{Aut}(\CP{2})$ is the affine
transformation $(z,w)\longrightarrow (z,+\pi i)$.
\end{ex}

As a small consolation for the paradoxical phenomenon, illustrated
by Examples~6.2,~6.3, we show now that this does not happen if $F$
is single-valued.

\begin{propos}\label{p.kl}
Let $M\subset\CC{n}$ be a smooth real-analytic hypersurface,
containing a complex hypersurface $X\subset\CC{n}$ and Levi
nondegenerate in $M\setminus X$. Suppose that $M$ is
pseudospherical with $M^+$ being $(k,l)$-spherical and the
multiple-valued analytic mapping $F$, obtained in Theorem~5.2, is
single-valued. Then:

\smallskip

(i) $M^-$ is also $(k,l)$-spherical.

\smallskip

(ii) $F$ maps both components $M^+,M^-$ to the same hyperquadric
$\mathcal{Q}$.
\end{propos}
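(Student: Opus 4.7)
The plan is to derive (i) as an immediate consequence of (ii): since $F$ is locally biholomorphic, once $F(M^-)\subset \mathcal Q$ is established, $M^-$ is automatically locally biholomorphic to an open piece of $\mathcal Q$ and hence $(k,l)$-spherical. The crux is therefore to prove $F(M^-)\subset\mathcal Q$. I will propagate the relation ``$F(Z)\in\mathcal Q$'' from $M^+$ to $M^-$ via the Segre complexification of $M$, exploiting that single-valuedness promotes the multi-valued map from Theorem~5.2 to a genuine holomorphic map $F:U_1\setminus X\to\CP{n}$, on which the identity principle becomes available.

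Write $\mathcal Q=\{H(\zeta,\bar\zeta)=0\}$ for a Hermitian form $H$ on $\CC{n+1}$, cover $U_1\setminus X$ by opens $V_\alpha$ carrying holomorphic lifts $\tilde F_\alpha:V_\alpha\to\CC{n+1}\setminus\{0\}$ of $F$, and put
$$
\Phi_\alpha(Z,W):=H\bigl(\tilde F_\alpha(Z),\overline{\tilde F_\alpha(\bar W)}\bigr),\qquad (Z,W)\in V_\alpha\times\bar V_\alpha.
$$
Changing the lift multiplies $\Phi_\alpha$ by a nowhere vanishing holomorphic factor, so the zero sets $\{\Phi_\alpha=0\}$ agree on overlaps and glue to a closed complex-analytic subset $\mathcal Z\subset(U_1\setminus X)\times\overline{(U_1\setminus X)}$ characterized by $F(Z)\in\mathcal Q\iff(Z,\bar Z)\in\mathcal Z$. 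Let $\mathcal M=\{\rho(Z,W)=0\}\subset U_1\times\bar U_1$ denote the Segre complexification of $M$; after shrinking $U_1$ I may assume $\rho$ is irreducible, so $\mathcal M$ is an irreducible complex hypersurface of dimension $2n-1$, smooth at every point $(Z,\bar Z)$ with $Z\in M\setminus X$.

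The hypothesis $F(M^+)\subset\mathcal Q$ reads $\Phi_\alpha(Z,\bar Z)=0$ for $Z\in M^+\cap V_\alpha$. The set $\{(Z,\bar Z):Z\in M^+\}$ is a totally real submanifold of $\mathcal M$ of real dimension $2n-1=\dim_\cx\mathcal M$, so the standard uniqueness principle for holomorphic functions yields $\Phi_\alpha\equiv 0$ on $\mathcal M$ in a full complex neighbourhood of this set. Next, invoking $Q_Z=X$ for every $Z\in X$ (Section~2), one verifies that
$$
\mathcal M\cap\bigl[(X\times\bar U_1)\cup(U_1\times\bar X)\bigr]=X\times\bar X,
$$
a proper analytic subset of $\mathcal M$ of complex dimension $2n-2$. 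Consequently the punctured complexification
$$
\mathcal M^\circ:=\mathcal M\cap\bigl[(U_1\setminus X)\times\overline{(U_1\setminus X)}\bigr]
$$
is still connected, and the identity principle on the irreducible variety $\mathcal M^\circ$ forces $\mathcal M^\circ\subset\mathcal Z$. Specializing to $W=\bar Z$ with $Z\in M^-$ now yields $F(M^-)\subset\mathcal Q$, which is (ii); (i) follows at once.

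The main technical hurdle will be handling the projective target cleanly — in particular verifying that the local $\Phi_\alpha$ patch to a globally defined zero locus $\mathcal Z$ — together with a rigorous check that $\rho$ can be chosen irreducible and that the removal of $X\times\bar X$ from $\mathcal M$ does not disconnect it. Once these geometric-algebraic points are in place, a single application of the identity principle on $\mathcal M^\circ$ does all the work of transferring the defining equation of $\mathcal Q$ from one side of $X$ to the other.
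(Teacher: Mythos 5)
Your argument is correct in substance but takes a genuinely different route from the paper's. The paper stays entirely inside the Segre-chain machinery it has already built: by Propositions 3.1 and 4.1 there is a chain $p_0=p^+,p_1,\dots,p_{2j-1}$ with $p^-\in Q_{p_{2j-1}}$, together with continuations $F_1,\dots,F_{2j}$; single-valuedness makes each $F_k$ a restriction of the global $F$, the defining property of the analytic set $A_{2j}$ then gives $F(Q_{p^-})\subset Q'_{F(p^-)}$, and reflexivity ($p^-\in M\Rightarrow p^-\in Q_{p^-}$) yields $F(p^-)\in\Q$. You instead complexify: the relation ``$F(Z)\in\Q$'' is encoded in the analytic set $\mathcal Z$, seeded near the maximally totally real set over a neighbourhood of $p^+$ in $M^+$, and propagated across $X$ by the identity principle on the irreducible complexification $\mathcal M$, where the fact $Q_p=X$ for $p\in X$ is exactly what keeps $\mathcal M\setminus(X\times\overline{X})$ connected. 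Your specialization to the anti-diagonal is the complexified form of the paper's reflexivity step, and connectivity of $\mathcal M^\circ$ plays the role of the paper's exhaustion of $U_1\setminus X$ by Segre sets (Proposition 3.1). Your route buys self-containedness: in the propagation step it uses neither the $\Q$-Segre property nor Levi nondegeneracy, only that $F$ is a single-valued holomorphic map on $U_1\setminus X$ whose germ at one point sends $M$ into $\Q$ (note that this is all the hypothesis literally provides --- your phrase ``$F(M^+)\subset\Q$'' is an output, not an input, but your argument only needs the seed at $p^+$, so this is harmless); it also makes transparent why single-valuedness is indispensable, since no global $\mathcal Z$ exists for a multivalued $F$, consistent with Example 6.2. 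The paper's route buys economy: it reuses already-proved propositions and needs no verification of irreducibility of $\mathcal M$ --- though that verification is in fact immediate from the graph representation $w=h(z,W)$ in a standard pair of neighbourhoods.

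One slip needs repair. You define $\Phi_\alpha$ only on the same-index products $V_\alpha\times\bar V_\alpha$, whose union is merely a neighbourhood of the anti-diagonal; intersected with $\mathcal M$, this is a neighbourhood of the totally real set over $M\setminus X$, which has \emph{two} components, so the identity principle run on this domain alone can never carry the vanishing from the $M^+$-diagonal to the $M^-$-diagonal --- the crossing happens precisely through points $(Z,W)\in\mathcal M^\circ$ with $Z$ far from $\bar W$, which your products do not cover. The fix is immediate and requires no new idea: for every pair $(\alpha,\beta)$ set $\Phi_{\alpha\beta}(Z,W)=H\bigl(\tilde F_\alpha(Z),\overline{\tilde F_\beta(\bar W)}\bigr)$ on $V_\alpha\times\bar V_\beta$; changing either lift multiplies $\Phi_{\alpha\beta}$ by a nowhere vanishing factor holomorphic in $(Z,W)$, so the zero sets glue to an analytic subset of all of $(U_1\setminus X)\times\overline{(U_1\setminus X)}$, which contains $\mathcal M^\circ$ in its domain, and the rest of your argument goes through as written.
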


\begin{proof}
Choose $U_1,p^+,p^-,F_0,\mathcal{Q}$ as in the proof of Theorem
6.1 and apply Propositions 3.1 and 4.1 to find a sequence
$p_0=p^+,p_1,...,p_{2j-1}$ such that the point $p_-\in
Q_{p_{2j-1}}$ as well as all the continuations $F_1,...,F_{2j}$.
Since $F$ is single-valued,  the continuations are simply
restrictions of $F$ onto some domains in $U_1\setminus X$. By the
definition of $F_{2j}$ we have $F_{2j-1}(Q_{p^-})\subset
Q'_{F_{2j}(p^-)}$ so that $F(Q_{p^-})\subset Q'_{F(p^-)}$. But
$p^-\in M$ and accordingly $p^-\in Q_{p^-}$, so $F(p^-)\in
Q'_{F(p^-)}$, which means that $F(p^-)\in \mathcal{Q}$. Since
$p^-\in M^-$ is arbitrary, this shows that
$F(M^-)\subset\mathcal{Q}$ and proves both (i) and (ii).
\end{proof}

It was communicated to us by V.\,Beloshapka that an alternative
proof of Theorem 6.1 can be deduced from the differential
equations characterizing sphericity of a Levi nondegenerate
hypersurface obtained by J.\,Merker in
\cite{merker1},~\cite{merker2}.

\section{The monodromy}

In this section we give a description of the multiple-valued
extension obtained in Theorem~5.2. It will allow us to find an
interesting interaction between nonminimal pseudospherical
hypersurfaces in $\CC{n}$ and linear differential equations of
order $n$.

Let $M, X, U_1,p,F_0$ satisfy Theorem~5.2, and let $F$ be the
(multiple-valued) analytic mapping obtained there. Consider a
noncontractible cycle $\gamma:\,[0,1]\longrightarrow U_1\setminus
X$, $\gamma(0)=\gamma(1)=p$, which is a generator of the
fundamental group of $U_1\setminus X$. Let $(F_1,p)$ be the
analytic continuation of the element $(F_0,p)$ of $F$ along
$\gamma$ to the point $p$. Applying the $\mathcal{Q}$-Segre
property of $F_0,F_1$ and using Proposition~5.1, we obtain a
mapping $\sigma\in {\rm Aut}(\CP{n})$ such that $F_1=\sigma\circ
F_0$. General properties of analytic continuation and the global
character of $\sigma$ imply that the linear automorphism $\sigma$
is independent:

\smallskip

(i) Of the choice of a generator $\gamma$,

\smallskip

(ii) Of the choice of an analytic element $(q,F_{q,0})$ of $F$ at
a point $q\in U_1\setminus X$.

\smallskip

To show (ii), for example, we choose a path $\gamma_q$ such that
$F_{q,0}$ is an extension of $F_0$ along $\gamma_q$ and denote by
$F_{q,1}$ the extension of $F_{q,0}$ along $\gamma$ (we suppose,
without loss of generality, that $q\in\gamma$). Again, the
$\mathcal{Q}$-Segre property of the elements of $F$ and
Proposition~5.1 show that there exists an element $\sigma'\in {\rm
Aut}(\CP{n})$ such that $F_{q,1}=\sigma'\circ F_{q,0}$. Note that
$F_{q,1}$ is obviously the extension of $F_1$ along $\gamma_q$.
But $F_1=\sigma\circ F_0$ so that the extension of $F_1$ along
$\gamma_q$ equals (by the uniqueness) $\sigma\circ F_{q,0}$ and we
conclude that $\sigma\circ F_{q,0}=\sigma'\circ F_{q,0}$ and
finally $\sigma=\sigma'$, as required. The proof of (i) is
analogous.

To see the dependence of $\sigma$ on the choice of  the initial
local biholomorphic mapping $F_0$ of $M$ onto a hyperquadric,
choose some other local biholomorphic mapping $\tilde F_0$ of
$(M,p)$ to a possibly different hyperquadric $\mathcal{\tilde Q}$
and denote the continuation of $\tilde F_0$ along $\gamma$ by
$\tilde F_1$ and the corresponding linear automorphism of $\CP{n}$
by $\tilde\sigma$. Then applying Proposition~5.1 we conclude that
there exists $\tau\in {\rm Aut}(\CP{n})$ such that $\tilde
F_0=\tau\circ F_0$ and so the continuation of $\tilde F_0$ along
$\gamma$ equals $\tau\circ F_1=\tau\circ\sigma\circ F_0$. On the
other hand, $\tilde F_1=\tilde\sigma\circ\tilde
F_0=\tilde\sigma\circ\tau\circ F_0$ so that $\tau\circ\sigma\circ
F_0=\tilde\sigma\circ\tau\circ F_0$ and
$\tau\circ\sigma=\tilde\sigma\circ\tau$. In fact, the linear
automorphism $\tau$ is a linear projective equivalence of
$\mathcal{Q}$ and $\mathcal{\tilde Q}$. We finally may express
$\tilde\sigma$ as follows:
\begin{gather}\label{e.10}
\tilde\sigma=\tau\circ\sigma\circ\tau^{-1}.
\end{gather}
Relation \eqref{e.10} shows that the \it monodromy matrix \rm
$\sigma$ is defined up to matrix conjugation and scaling.  We will
call this conjugacy class \it the monodromy operator of $M$ \rm
and denote it by $\Sigma$. \rm This term is used in analogy with
the \it monodromy matrix \rm of a linear differential equation  of
order $n$ at a singular point\,\cite{ilyashenko}. The monodromy
operator does not depend on the choice of the cycle $\gamma$, the
point $p\in U_1\setminus X$, the element $F_p$ of $F$, the target
hyperquadric $\mathcal Q$, and the initial local biholomorphic
mapping $F_0$ of $M^+$ or $M^-$ into $\mathcal Q$, and is only a
characteristic of the holomorphic geometry of a nonminimal
pseudospherical real-analytic hypersurface. This geometry can be
also characterized, for example, by the Jordan normal form of
$\Sigma$, defined up to scaling of its diagonal part or,
alternatively, by the cyclic subgroup
$H=\{\sigma^k,\,k\in\mathbb{Z}\}\subset \mbox{Aut} (\CP{n})$
generated by $\sigma$, defined up to conjugation. Note that the
subgroup $H$ exactly determines all possible elements of $F$ at a
point $p\in U_1\setminus X$, and all the elements have the form
$$
F_{p,k}=\sigma^{k}\circ F_{p,0},\ \ k\in\mathbb{Z},
$$ where
$F_{p,0}$  is some fixed element. Both the (scaled) Jordan normal
form of $\Sigma$, and (the conjugacy class of) the subgroup
$H\subset\mbox{Aut} (\CP{n})$ precisely characterize the monodromy
of $F$ about $X$.

The analogy with differential equations goes even further. Choose
the local coordinate system in such a way that $X$ is given in
$U_1$ by the condition $w=0$. Consider the
$(n+1)\times(n+1)$-matrix $\sigma$ (defined up to scaling). We set
$$
A:=\frac{1}{2\pi i}\ln\sigma
$$
(we may choose any of the matrix logarithms), and consider in a
neighbourhood $U(p)$ of $p$ the mapping
$$
G_0:\,U(p)\longrightarrow\CP{n},\,G_0(z,w):=w^{-A}\cdot F_0(z,w).
$$
Here we understand $F_0(z,w)$ as the column of its $n+1$
homogeneous coordinates and by $w^{-A}$ we understand the
functional matrix exponent $e^{-A\ln w}$. The definition of $G_0$
does not depend on the choice of the uncertain factor of $\sigma$
since the uncertain factor clearly just scales the column,
representing $G_0$, and that does not change the element
$G_0(z)\in\CP{n},\,z\in U(p)$. Then $G_0$ extends along an
arbitrary path in $U_1\setminus X$, because $F_0$ and the
matrix-valued mapping $w^{-A}$ do, and determines a
(multiple-valued) analytic mapping $G$. Since the monodromy of
$w^{-A}$ is given by
$$
w^{-A}\longrightarrow \sigma^{-1}w^{-A}=w^{-A}\sigma^{-1},
$$
the monodromy of $G$ is given by
$$
G_0\longrightarrow w^{-A}\cdot\sigma^{-1}\cdot\sigma\cdot
F_0=w^{-A}\cdot F_0=G_0.
$$
Hence, by the Monodromy theorem\,\cite{shabat}, $G$ is a
single-valued holomorphic mapping, and we obtain the following
formula characterizing the multiple-valuedness of $F$:
$$
F=w^A\cdot G\qquad\mbox{(the Monodromy formula)},
$$
where $G$ is a holomorphic mapping $U_1\setminus
X\longrightarrow\CP{n}$.  Note that a very similar formula holds
for the monodromy of the fundamental matrix of solutions of a
linear differential equation of order~$n$,~\cite{ilyashenko}. The
Monodromy formula generalizes Examples 1.1, 5.3, 5.4, and gives a
local monodromy representation of an \it arbitrary \rm
multiple-valued extension of a local biholomorphic mapping from a
nonminimal real hypersurface to a quadric.

We summarize our arguments in the following theorem, which is the
expanded formulation of Theorem~\ref{t.2}.

\begin{thm}
Let $M,X,U_1,\mathcal{Q}$ satisfy Theorem 5.2, and $F$ be the
(multiple-valued) analytic mapping obtained there. Then there
exists an element $\sigma\in\mbox{Aut}\,(\CP{n})$ such that:

\smallskip

1. The monodromy of $F$ with respect to a generator $\gamma$ of
the fundamental group of $U_1\setminus X$ is given by
$$F_q\longrightarrow\sigma\circ F_q,$$ where $F_q$ is an arbitrary
element of $F$ at a point $q\in U_1\setminus X$. In particular,
the collection of all elements of $F$ at a point $q$ is given by
the natural action of the cyclic subgroup of
\rm$\mbox{Aut}\,(\CP{n})$\it, generated by $\sigma$, on a fixed
analytic element of $F$ at $q$.

\smallskip

2. All possible changes of the target hyperquadric
$\mathcal{Q}\subset\CP{n}$ and the local biholomorphic map $F_0$,
transferring $(M,p)$ to $\mathcal{ Q}$ transform the monodromy
matrix $\sigma$ by the formula
$$\sigma\longrightarrow\tau\circ\sigma\circ\tau^{-1},$$
where $\tau\in\mbox{Aut}\,(\CP{n})$, and thus generate the
monodromy operator $\Sigma$. The correspondence $$M\longrightarrow
J(M),$$ where $J(M)$ is the (scaled) Jordan normal form of
$\Sigma$, is only characterized by holomorphic geometry of $M$. In
particular, $J(M)$ is a biholomorphic invariant of $M$.

\smallskip

3. If the local coordinates $(z,w)$ at the origin are chosen in
such a way that $X=\{w=0\}$ then there exists a single-valued
holomorphic mapping $G:\,U_1\setminus X\longrightarrow\CP{n}$ such
that the following Monodromy formula holds: $$F=w^A\cdot G,$$
where $2\pi iA$ is a complex logarithm of the monodromy matrix
$\sigma$.
\end{thm}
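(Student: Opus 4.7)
The plan is to establish the three assertions in sequence. First, I would construct the monodromy matrix $\sigma$ directly. Fix a generator $\gamma$ of $\pi_1(U_1\setminus X)$ based at $p$ and let $(F_1,p)$ denote the analytic continuation of $(F_0,p)$ along $\gamma$, which exists by Theorem~5.2. By Corollary~5.3 both $F_0$ and $F_1$ have the $\mathcal{Q}$-Segre property on a common polydisc around $p$, so Proposition~5.1 yields a $\sigma\in\mathrm{Aut}(\CP{n})$ with $F_1=\sigma\circ F_0$. To prove independence of the base point and of the choice of analytic element, I would take any element $(F_{q,0},q)$, pick a path $\gamma_q$ from $p$ to $q$ along which $F_{q,0}$ is the continuation of $F_0$, continue $F_{q,0}$ around the based loop $\gamma_q^{-1}\gamma\gamma_q$ to obtain $F_{q,1}=\sigma'\circ F_{q,0}$ (again via Proposition~5.1), and exploit uniqueness of analytic continuation: $F_{q,1}$ also equals the continuation along $\gamma_q$ of $F_1=\sigma\circ F_0$, which is $\sigma\circ F_{q,0}$ by the global character of $\sigma$; hence $\sigma'=\sigma$. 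The iteration property $F_{p,k}=\sigma^k\circ F_{p,0}$ and part~1 then follow from $\pi_1(U_1\setminus X)\cong\mathbb{Z}$.

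For part~2, given another local biholomorphic map $\tilde F_0$ of $(M,p)$ onto a hyperquadric $\tilde{\mathcal{Q}}$, Proposition~5.1 again produces $\tau\in\mathrm{Aut}(\CP{n})$ with $\tilde F_0=\tau\circ F_0$ (the $\mathcal{Q}$-Segre property is available for $\tilde F_0$ because the Segre varieties of every nondegenerate hyperquadric are projective hyperplanes). Continuing the identity $\tilde F_0=\tau\circ F_0$ along $\gamma$ and comparing with $\tilde F_1=\tilde\sigma\circ\tilde F_0$ yields $\tau\circ\sigma=\tilde\sigma\circ\tau$, i.e.\ $\tilde\sigma=\tau\circ\sigma\circ\tau^{-1}$. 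Since any matrix lift of $\sigma$ to $\mathrm{GL}_{n+1}$ is determined only up to a scalar, the Jordan normal form with its diagonal part normalized modulo a common scalar is invariant under both conjugation and rescaling, and therefore defines a biholomorphic invariant $J(M)$.

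For the Monodromy formula (part~3), I choose a matrix logarithm and set $A:=(2\pi i)^{-1}\ln\sigma$. On a neighbourhood $U(p)\subset U_1\setminus X$ on which a branch of $\ln w$ is fixed, define
\[
G_0(z,w):=w^{-A}\cdot F_0(z,w),\qquad w^{-A}:=e^{-A\ln w},
\]
with $F_0$ written as a column of $n+1$ homogeneous coordinates. Continuing $G_0$ along $\gamma$ transforms $\ln w\mapsto\ln w+2\pi i$, so $w^{-A}\mapsto e^{-2\pi iA}w^{-A}=\sigma^{-1}w^{-A}$, while $F_0\mapsto\sigma\cdot F_0$; the two actions cancel and $G_0$ has trivial monodromy along the generator $\gamma$. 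Since $\pi_1(U_1\setminus X)$ is cyclic, the Monodromy theorem promotes $G_0$ to a single-valued holomorphic map $G:U_1\setminus X\to\CP{n}$, and $F=w^A\cdot G$ is immediate. Independence of the scalar representative of $\sigma$ follows because rescaling $\sigma$ by $c\in\cx^*$ changes $A$ by a scalar multiple of the identity, which multiplies the homogeneous column $G_0$ by a scalar function of $w$ and hence leaves the point in $\CP{n}$ unchanged.

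The main technical obstacle is the bookkeeping of the projective/scalar ambiguity — $\sigma$ is an element of $\mathrm{Aut}(\CP{n})$, so its matrix lift, its logarithm $A$, and the function $w^{-A}$ are defined only modulo a scalar, and one must verify at each step that this ambiguity is absorbed by the projective equivalence on the target. A secondary point is the well-definedness of the cyclic subgroup $\phi(\pi_1(U_1\setminus X))\subset\mathrm{Aut}(\CP{n})$ up to conjugation: both a different generator of the fundamental group and a different freely homotopic representative yield the same (or the inverse) automorphism, which follows from the local uniqueness of Proposition~5.1 and the triviality of higher homotopy identifications in the cyclic group.
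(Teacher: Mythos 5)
Your proposal is correct and follows essentially the same route as the paper: the same construction of $\sigma$ via Proposition~5.1 and the $\mathcal{Q}$-Segre property, the same uniqueness-of-continuation argument for independence of base point and element, the same conjugation computation $\tilde\sigma=\tau\circ\sigma\circ\tau^{-1}$, and the same definition $G_0=w^{-A}\cdot F_0$ with cancelling monodromies to obtain $F=w^A\cdot G$. Your explicit handling of the scalar ambiguity in the matrix lift matches the paper's remark that rescaling $\sigma$ merely multiplies the homogeneous column by a scalar function of $w$.
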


\begin{rema}
If $\sigma$ is a scalar matrix, i.e., the monodromy operator
$\Sigma$ is the identity, we conclude, by the Monodromy
theorem\,\cite{shabat}, that the multiple-valued map $F$ is, in
fact, a single-valued locally biholomorphic mapping $F:
U_1\setminus X\longrightarrow\CP{n}$.
\end{rema}

\begin{ex}
For the hypersurfaces $M_\alpha\subset\CC{2}$ given
by~\eqref{e.Ma} with $\alpha\in\mathbb{Z}$ and the hypersurfaces
$K_m\subset\CC{2}$ given by~\eqref{e.Km}, the monodromy operator
is the identity, and the map $F$ is single-valued. For the
hypersurfaces $M_\alpha$ with $\alpha\notin\mathbb{Z}$ the
monodromy operator has a diagonal Jordan normal form:
$$
J(M_\alpha)=\mbox{diag}\,\{e^{2\pi i\alpha},e^{4\pi i\alpha},1\}.
$$
Thus, the Monodromy representation becomes
$$
F=\begin{pmatrix}w^\alpha & 0 & 0\\ 0 & w^{2\alpha} & 0\\ 0 & 0 &
1\end{pmatrix} \cdot\begin{pmatrix}z \\ 1 \\ 1\end{pmatrix}.
$$
Finally, for the model example $M^{\rm log}$ the (appropriately
scaled) Jordan normal form is given by
$$
J(M^{\rm log})=\begin{pmatrix} 1 & 0 & 0\\
0 & 1 & 2\pi i\\ 0 & 0 & 1\end{pmatrix}.
$$
Decomposing the matrix to the sum of a diagonal and a nilpotent
matrices and computing the logarithm, we get $A=\begin{pmatrix}0 &
0 & 0\\ 0 & 0 & 1\\0 & 0 & 0\end{pmatrix}$, and so the Monodromy
representation takes the form:
$$F=w^{A}\cdot\begin{pmatrix} z\\ 0\\
1\end{pmatrix}=\begin{pmatrix}1 & 0 & 0\\ 0 & 1 & \ln w\\0 & 0 &
1\end{pmatrix}\cdot\begin{pmatrix} z\\ 0\\ 1\end{pmatrix}.$$
\end{ex}

\section{The algebraic case}

In this section we show that if the nonminimal pseudospherical
hypersurface $M$ in the preimage is algebraic then the
multiple-valued extension $F$, obtained by Theorem 5.2, admits
certain holomorphic extension to $X$.

We start with preliminaries.  Let $M\subset\CC{n}$ be a smooth
real-analytic nonminimal hypersurface, containing a complex
hypersurface $X$ and Levi nondegenerate in $M\setminus X$. We
choose local coordinates $(z,w) \in\CC{n-1}\times \CC{}$, at the
origin in such a way that the complex hypersurface, contained in
$M$, is given by $X=\{w=0\}$, and $M$ is given locally by the
equation
$$\im w=(\re w)^m\Phi(z,\bar z,\re w),$$ where $\Phi(z,\bar z,\re w)$ is a real-analytic function
in a neighbourhood of the origin such that  $\Phi(z,\bar z,0)\ne
0$ identically, $\Phi=O(2)$ and $m$ is a positive integer (see
\cite{ber} for the existence of such coordinates). We  may further
consider the local "complex defining equation" (see \cite{ber},
\cite{merker2}, \cite{elz}) of the form
$$
w=\bar w\,\Theta(z,\bar z,\bar w),
$$ where $\Theta=1+O(2)$ is real-analytic. So finally we come to
the following defining equation for $M$:
\begin{gather}
w=\bar w\,e^{i\varphi(z,\bar z,\,\bar w)},
\end{gather}
where the complex-valued real-analytic function $\varphi$ in a
polydisc $U\ni 0$ satisfies  the condition $\varphi(z,\bar z,\bar
w)=O(2)$ and also the reality condition
\begin{equation}\label{e.real}
\varphi(z,\bar z,w\,e^{-i\bar\varphi(\bar z,z,w)})\equiv
\bar\varphi(\bar z,z,w),
\end{equation}
reflecting the fact that $M$ is a real hypersurface. In what
follows we call (13) \it the exponential defining equation \rm for
a nonminimal hypersurface $M$.

Generalizing the ideas in \cite{lamel}, consider in a sufficiently
small polydisc $\tilde U\ni 0$ the real-analytic subset
$$\tilde M=\{(z^*,w^*)\in\tilde U:\,w^*=\bar w^*e^{\frac{i}{k}\varphi(z^*,\bar
z^*,(\overline{ w^*})^k)}\},$$ containing the complex hypersurface
$\tilde X=\{w^*=0\}$, where $k\geq 2$ is an integer. It follows
from~\eqref{e.real} that $\tilde M$ is in fact a smooth
real-analytic hypersurface and that the mapping
\begin{equation*}
z^*= z,\,w^*=\sqrt[k] w,\ \
-\frac{\pi}{k}<\mbox{Arg}\,w<\frac{\pi}{k},
\end{equation*}
sends the half of $M$ satisfying $\re w>0$ into the half of
$\tilde M$ satisfying $\re w^*>0$.  The hypersurface $\tilde M$ is
called \it the $k$-root of $M$. \rm Since the inverse mapping
\begin{equation}\label{e.nu}
\nu:\,z=z^*,\,w=(w^*)^k
\end{equation}
is holomorphic in all of $\tilde U$ and locally biholomorphic in
$\tilde U\setminus \tilde X$, it maps the entire $\tilde M$ into
$M$, preserving the complex hypersurfaces. This means that $\tilde
M\setminus \tilde X$ is Levi nondegenerate as well.

\begin{thm}
Let $M,X,U_1,p,F_0$ satisfy Theorem 5.2. Suppose, in addition,
that $M$ is real-algebraic and let $F:\,U_1\setminus
X\longrightarrow\CP{n}$ be the (multiple-valued) holomorphic
extension obtained in Theorem 5.2. Then:

\begin{enumerate}

\item[(i)] If the mapping $F$ is single-valued then it extends to $X$
holomorphically  and $F(M)\subset\mathcal Q$.

\item[(ii)] If the mapping $F$ is multiple-valued then it extends to $X$
as a holomorphic correspondence.   Furthermore, if the coordinates
$(z,w)$ in $U_1$ are chosen in such a way that $X\cap U_1=\{w=0\}$
then $F$ admits the representation $$F(z,w)=\tilde
F(z,\sqrt[k]{w}), $$ where $\tilde
F:\,(\CC{n},0)\longrightarrow\CP{n}$ is a single-valued
holomorphic mapping, and $k\geq 2$ is an integer.

\item[(iii)] Let $D\subset U\setminus X,\,X\subset\partial D$ be a domain
where the multiple-valued mapping $F$ admits a single-valued
branch. Then $F|_D$ extends continuously to $D\cup X$.

\end{enumerate}
\end{thm}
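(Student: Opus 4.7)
The plan is to exploit the algebraicity of both $M$ and the target hyperquadric $\mathcal Q$ via a Webster-type algebraicity principle: any local biholomorphism between two real-algebraic Levi-nondegenerate hypersurfaces has its graph contained in an algebraic subvariety of $\mathbb C^n\times\CP n$. Applied to each germ of $F$ at a point $q\in M\setminus X$ (where both $M$ and $\mathcal Q$ are Levi nondegenerate and algebraic), this gives the algebraicity of the germ. Using the $\mathcal Q$-Segre property from Corollary~5.3 and the extension along Segre varieties (all themselves algebraic as $M$ is algebraic), I would propagate algebraicity from $M\setminus X$ to every analytic element of $F$ over $U_1\setminus X$. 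Consequently, the union of all graphs of $F$ lies in a single irreducible algebraic subvariety $V\subset\mathbb C^n\times\CP n$ whose projection to $\mathbb C^n$ has finite generic degree. Restricting $V$ to a neighbourhood of $X$ gives an extension of the graph of $F$ across $X$ as a closed algebraic subset, i.e., as a holomorphic correspondence.

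For part~(i), once $F$ is single-valued on $U_1\setminus X$, the projection $V\to\mathbb C^n$ is generically one-to-one, so $V$ defines a rational map $U_1\dashrightarrow\CP n$ which agrees with $F$ outside $X$. The indeterminacy locus of a rational map into $\CP n$ has codimension $\geq 2$, while $X$ is of codimension~$1$; hence the map extends holomorphically across $X$. The inclusion $F(M)\subset\mathcal Q$ then follows by closure from $F(M\setminus X)\subset\mathcal Q$.

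For part~(ii), finiteness of the generic fiber of $V\to\mathbb C^n$ forces the monodromy operator $\Sigma$ of Theorem~7.1 to be of finite order, so $\sigma^k=\mathrm{id}$ in $\mathrm{Aut}(\CP n)$ for some integer $k\geq 2$ (equivalently, the scaled Jordan form is diagonal with roots of unity). I would then apply the $k$-root construction of Section~8: the pullback by $\nu(z^*,w^*)=(z^*,(w^*)^k)$ produces a real-algebraic hypersurface $\tilde M$ containing $\tilde X=\{w^*=0\}$ and Levi nondegenerate off $\tilde X$. The Monodromy formula $F=w^A\cdot G$ shows that $F\circ\nu$ has trivial monodromy around $\tilde X$ and so lifts to a single-valued map $\tilde F$ on $\tilde U\setminus\tilde X$, which continues to be a $\mathcal Q$-Segre biholomorphism on $\tilde M\setminus\tilde X$. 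Applying part~(i) to $\tilde F$ yields its holomorphic extension across $\tilde X$, and the identity $F(z,w)=\tilde F(z,\sqrt[k]{w})$ gives the required representation together with extension of $F$ to $X$ as a holomorphic correspondence.

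For part~(iii), a single-valued branch $F|_D$ corresponds to selecting one sheet of the algebraic correspondence $V$ over $D$. For $x\in X$, the cluster set of $F|_D$ at $x$ is a nonempty compact subset of the finite fiber $V_x\subset\CP n$; since $D$ is connected and the approach to $x$ from within $D$ can be taken through a connected tail, this cluster set is connected, hence a single point, providing the continuous extension to $D\cup X$. The main obstacle in the whole argument is establishing the algebraicity of $F$ (and thus finiteness of its sheets, equivalently finite order of $\sigma$) uniformly over $U_1\setminus X$: the classical Webster theorem applies only at Levi-nondegenerate points, so the algebraicity must be transported across $X$ through chains of Segre varieties together with the $\mathcal Q$-Segre property, which also requires verifying that the $k$-root construction preserves algebraicity and is compatible with the extension across $X$ and $\tilde X$.
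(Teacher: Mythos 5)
Your overall route (Webster's algebraicity theorem, finite-valuedness of $F$, reduction of (ii) to (i) via the $k$-root map $\nu(z^*,w^*)=(z^*,(w^*)^k)$ and triviality of the lifted monodromy) is the same as the paper's, but there is a genuine gap in your proof of (i). From properness of the projection $V\to U_1$ and generically one-point fibers you correctly obtain a meromorphic map $U_1\dashrightarrow\CP{n}$ extending $F$ whose indeterminacy locus has codimension $\geq 2$; it does \emph{not} follow that the map is holomorphic across $X$. A set of codimension $\geq 2$ can perfectly well be contained in the codimension-one set $X$ (think of $(z,w)\mapsto[z:w]$ near the origin in $\CC{2}$, whose single indeterminacy point lies on $\{w=0\}$), so the inference ``indeterminacy has codimension $\geq 2$, $X$ has codimension $1$, hence the map extends holomorphically across $X$'' is a non sequitur. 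The paper closes exactly this hole in two steps: at a point $o\in X$ outside the analytic set $E=\{q:\dim\pi^{-1}(q)>0\}$, which has dimension $\leq n-2$, the fiber $\pi'(\pi^{-1}(o))$ is finite, so one can choose a projective hyperplane avoiding it; in the resulting affine chart $F$ is bounded near $o$ and Riemann's removable singularity theorem gives a holomorphic extension off $E$; then Griffiths' extension theorem \cite{gr} — not mere codimension counting — is invoked to push the holomorphic map across the codimension-$\geq 2$ set $E$. Without some such argument your (i), and hence the single-valued case that your (ii) reduces to, is incomplete.

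Two smaller points. What you flag as ``the main obstacle'' — propagating algebraicity along Segre chains uniformly over $U_1\setminus X$ — is not an obstacle at all: Webster's theorem \cite{webster} applied once at the initial Levi nondegenerate point $p$ places the graph of the germ $F_0$ inside an algebraic set $A\subset U_1\times\CP{n}$ of dimension $n$, and every analytic continuation of $F_0$ keeps its graph inside $A$ automatically, by the identity principle along paths; no re-application of Webster or compatibility check with the Segre machinery is needed (this also settles algebraicity for $\tilde F=F\circ\nu$, since $\nu$ is polynomial, so applying (i) to the $k$-root is legitimate). Second, your cluster-set argument in (iii) uses finiteness of the fiber $V_x$ at \emph{every} $x\in X$, which can fail at points of $E\cap X$, and connectedness of the cluster set requires $D\cap B_r(x)$ to be connected, which is not automatic for a general domain $D$ with $X\subset\partial D$. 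The paper instead reads (iii) directly off the representation $F(z,w)=\tilde F(z,\sqrt[k]{w})$ established in (ii), which yields $\lim_{(z,w)\to(z_0,0)}F|_D(z,w)=\tilde F(z_0,0)$ immediately; since you have (ii) available, you should do the same rather than run an independent and leakier cluster-set argument.
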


\begin{proof}
Fix $p\in M$ and $F_0$ as in Theorem 5.2. By Webster's theorem
\cite{webster} the graph of the local biholomorphic mapping
$F_0:\,U(p)\longrightarrow\CP{n}$ lies in a complex algebraic set
$A\subset U_1\times\CP{n}$ of dimension $n$. Accordingly, the
graph $\Gamma_F$ of the extended mapping $F$ lies in $A$ as well.
Let $\tilde A$ be the irreducible component of $A$ containing
$\Gamma_F$, and let $\pi:\,\tilde A\longrightarrow U_1$ and
$\pi':\,\tilde A\longrightarrow \CP{n}$ be the natural
projections. Compactness of $\CP{n}$ implies that the projection
$\pi$ is proper, so by Remmert's theorem, $\pi(\tilde A)$ is a
complex-analytic subset in $U_1$, so $\pi(\tilde A)=U_1$. Consider
now the set
$$
E=\{q\in U_1:\,\mbox{dim}(\pi^{-1}(q))>0\}.
$$ Then $E$ is a complex-analytic subset in $U_1$ (see, e.g.,~\cite{Lo}), and $\mbox{dim}\,E<n-1$ because
otherwise $\pi^{-1}(E)$ becomes a complex-analytic subset in
$\tilde A$ of dimension $\geq n$. Therefore, $X\varsubsetneq E$
and we can find a point $o\in X$ such that some polydisc $U(o)$
does not contain points from $E$. To prove (i) we suppose that $F$
is single-valued and choose a projective hyperplane
$\Pi\subset\CP{n}$ such that $\Pi$ does not intersect the finite
set $\pi'(\pi^{-1}(o))\subset\CP{n}$. Choosing appropriate
coordinates in $\CP{n}$ we may assume that
$\Pi=\CP{n}\setminus\CC{n}$ and accordingly
$\pi'(\pi^{-1}(U(o)))\Subset\CC{n}$. By Riemann's theorem we
conclude now that $X$ is a removable singularity for $F|_{U(o)}$.
Thus, $F$ extends holomorphically to the complement of $E$. Since
$\dim E \le n-2$, it follows that $F$ extends holomorphically to
all of $U$ (see, e.g., \cite{gr}). The inclusion $F(M)\subset
\mathcal Q$ follows from the uniqueness.

To prove (ii) we note that, by  algebraicity of $F_0$, its
multiple-valued extension $F$ is in fact finite-valued which means
that there exists an integer $k\geq 2$ such that the extension of
the analytic element $(F_0,p)$ along the path $\gamma^k$, where
$\gamma$ is the generator of $\pi_1(U_1\setminus X$), does not
change this element. Choose now the coordinates $(z,w)$ in $U_1$
in such a way that $X=\{w=0\}$ and $p\in M^+$ and consider $\tilde
M$ - the $k$-root of $M$. It follows from the arguments above that
$\tilde M$ satisfies all the conditions for Theorem~5.2 and we may
also consider the multiple-valued mapping $\tilde F$,
corresponding to the pseudospherical hypersurface $\tilde M$. The
map $\nu$ given by~\eqref{e.nu} gives the relation between $M$ and
$\tilde M$, and thus shows that the monodromy of $\tilde F$ with
respect to the generator $\gamma$ of $\pi_1(\tilde U\setminus X)$
is simply the identity, since the monodromy of $F$ with respect to
$\gamma^k$ is the identity. We conclude that the map $\tilde F$ is
single-valued and from claim (i), $\tilde F$ extends to $\tilde X$
holomorphically, and the explicit formula for $\nu$ now implies
(ii).

For the proof of  (iii) (only the multiple-valued case is not
immediate) it is easy to see from~(ii) that for each $o=(z_0,0)\in
X$ the limit $\lim\limits_{(z,w)\rightarrow o} F|_D(z,w)=\tilde
F(z_0,0)$, which shows the continuity of  the glued map in $D\cup
X$. This completes the proof of the theorem.
\end{proof}

\remark{8.2} When $\mathcal Q$ is strictly
pseudoconvex and $F$ is single valued, the set $F(X)$ in the above
theorem becomes a connected locally analytic set in $\mathcal Q$
which implies that $F(X)$ consists of one point. Using the $k$-root
construction it is easy to verify from here that in the
multiple-valued case the cluster set of $X$ with respect to any
single-valued branch of $F$ in a domain $D\subset U\setminus
X,\,X\subset\partial D$ consists of exactly one point.



\end{document}